\setlist[enumerate,1]{label={\upshape(\arabic*)}}
\setlist[enumerate,2]{label={\upshape(\alph*)}}
\numberwithin{equation}{section}
\titleformat{\subsection}{\normalfont\bfseries\upshape}{\thesubsection}{1em}{}  
\titlespacing*{\subsection}{0pt}{3ex}{2ex}  %
\tikzset{black/.style={circle,fill=black,inner sep=3pt,outer sep=3pt},
	white/.style={circle,fill=white,draw=black,inner sep=3pt,outer sep=3pt},
}
\newcolumntype{C}{>{$}c<{$}}
\newcommand{\Hom}{\operatorname{Hom}\nolimits}
\newcommand{\Iso}{\operatorname{Iso}\nolimits}
\theoremstyle{plain}
\newtheorem{theorem}{\bf Theorem}[section]
\newtheorem*{theoremi}{Theorem A}
\newtheorem{lemma}[theorem]{\bf Lemma}
\newtheorem{corollary}[theorem]{\bf Corollary}
\newtheorem{proposition}[theorem]{\bf Proposition}
\theoremstyle{definition}
\newtheorem{definition}[theorem]{\bf Definition}
\newtheorem{remark}[theorem]{\bf Remark}
\newtheorem{example}[theorem]{\bf Example}
\newtheorem{condition}[theorem]{\bf Condition}
\newtheorem*{conv}{Conventions and notation}
\newtheorem*{org}{Organization}
\newcommand{\bt}{\begin{theorem}}
	\newcommand{\et}{\end{theorem}}
\newcommand{\bl}{\begin{lemma}}
	\newcommand{\el}{\end{lemma}}
\newcommand{\bd}{\begin{definition}}
	\newcommand{\ed}{\end{definition}}
\newcommand{\bco}{\begin{condition}}
	\newcommand{\eco}{\end{condition}}
\newcommand{\bc}{\begin{corollary}}
	\newcommand{\ec}{\end{corollary}}
\newcommand{\bp}{\begin{proof}}
	\newcommand{\ep}{\end{proof}}
\newcommand{\bx}{\begin{example}}
	\newcommand{\ex}{\end{example}}
\newcommand{\br}{\begin{remark}}
	\newcommand{\er}{\end{remark}}
\newcommand{\be}{\begin{equation}}
	\newcommand{\ee}{\end{equation}}
\newcommand{\ba}{\begin{align}}
	\newcommand{\ea}{\end{align}}
\newcommand{\bn}{\begin{enumerate}}
	\newcommand{\en}{\end{enumerate}}
\newcommand{\bcs}{\begin{cases}}
	\newcommand{\ecs}{\end{cases}}
\newcommand{\s}{{\mathfrak{s}}}
\renewcommand{\AA}{\mathcal{A}}
\newcommand{\DD}{\mathcal{D}}
\newcommand{\EE}{\mathcal{E}}
\newcommand{\FF}{\mathcal{F}}
\newcommand{\FFF}{\mathrm{F}_{\Theta}}
\newcommand{\MM}{\mathcal{M}}
\newcommand{\NN}{\mathcal{N}}
\newcommand{\N}{\mathbb{N}}
\renewcommand{\SS}{\mathcal{S}}
\newcommand{\TT}{\mathcal{T}}
\newcommand{\XX}{\mathcal{X}}
\newcommand{\Cf}{\mathscr{C}}
\newcommand{\E}{\mathbb{E}}
\newcommand{\ov}{\overline}
\DeclareMathOperator{\torf}{\mathsf{torf_{\Theta}}}
\DeclareMathOperator{\tors}{\mathsf{tors}_{\Theta}}
\DeclareMathOperator{\ebrick}{\mathsf{ebrick}}
\DeclareMathOperator{\sbrick}{\mathsf{sbrick}}
\DeclareMathOperator{\mbrick}{\mathsf{mbrick}}
\DeclareMathOperator{\ccmbrick}{\mathsf{mbrick_{c.c.}}}
\DeclareMathOperator{\Fac}{\mathsf{Fac}}
\DeclareMathOperator{\0Filt}{\mathsf{Filt}}
\DeclareMathOperator{\aFilt}{\mathsf{Filt}_{\mathcal{A}}}
\DeclareMathOperator{\Filt}{\mathsf{Filt}_{\Theta}}
\DeclareMathOperator{\simp}{\mathsf{sim}}
\DeclareMathOperator{\Sub}{\mathsf{Sub}_{\Theta}}
\DeclareMathOperator{\cone}{\mathsf{cone}}
\DeclareMathOperator{\cocone}{\mathsf{cocone}}
\DeclareMathOperator{\lSchur}{\mathsf{Schur_L}}
\DeclareMathOperator{\rSchur}{\mathsf{Schur_R}}
\DeclareMathOperator{\add}{\mathsf{add}}
\renewcommand{\section}{\@startsection{section}{1}{0mm}
	{-\baselineskip}{0.5\baselineskip}{\bf\leftline}}
\begin{document}
	
	\title[Monobricks in extriangulated length categories]{Monobricks in extriangulated length categories}
	\author[Y. Mei, L. Wang, J. Wei]{Yuxia Mei, Li Wang, Jiaqun Wei}
	\address{School of Mathematics-Physics and Finance, Anhui Polytechnic University, 241000 Wuhu, Anhui, P. R. China \endgraf}
	\email{meiyuxia2010@163.com {\rm(Y. Mei)}, wl04221995@163.com {\rm (L. Wang)}}
	\address{School of Mathematical Sciences, Zhejiang Normal University, 321004 Jinhua, Zhejiang, P. R. China\endgraf}
	\email{weijiaqun5479@zjnu.edu.cn {\rm (J. Wei)}}

	\subjclass[2020]{18E40; 18E10; 18G80.}
	\keywords{extriangulated length category; monobrick;  torsion-free class}
	
	\begin{abstract} In this paper, we introduce the notation of monobricks in an extriangulated length category as a generalization of the semibricks. We prove that there is a  bijection between monobricks and left  Schur subcategories. Then  we show that this bijection restricts to bijection between cofinally closed monobricks and torsion-free classes. These extend the results of Enomoto for abelian length categories.
	\end{abstract}
	
	\maketitle
	

	\section{Introduction}\label{sec:1}
An abelian length category  \cite{Ga} is an abelian category whose every object has a finite composition series. They are pivotal in representation theory and intimately connected to several important topics, e.g.  $\tau$-tilting theory \cite{En2}, torsion classes \cite{As} and the second Brauer–Thrall conjecture \cite{Kr}.

 Schur's lemma indicates that the isomorphism classes of simple objects in an abelian length category form a semibrick. The analogous concept in triangulated categories is a simple-minded system. It was first introduced in \cite{KL} to investigate its stable equivalences and the Auslander-Reiten conjecture. In  \cite{Du}, Dugas studied torsion pairs and  simple-minded systems in triangulated categories from the point of view of mutations. In particular, Dugas pointed that bounded derived categories of finite
 dimensional algebras with finite global dimension admit simple-minded systems. This allows us to regard these derived categories as analogs of abelian length categories in triangulated categories.

Recently, Nakaoka and Palu \cite{Na} introduced the notation of  extriangulated categories by
extracting properties on triangulated categories and exact categories. In \cite{WLZZ},  the authors introduced  extriangulated length categories as a generalization of abelian length categories and bounded derived categories of finite dimensional algebras with finite global dimension.  The idea behind in \cite{WLZZ} was to consider the filtration categories generated by simple-minded systems. In this setting, they introduced torsion(-free) classes and studied them from the perspective of lattice theory. They also provided a unified framework for studying the torsion classes and $\tau$-tilting theory. 

The notion of monobricks in an abelian length categories was first introduced by Enomoto \cite{Eno} to generalize the notion of semibricks. Using monobricks, the torsion(-free) classes can be classified using only the information on bricks. He also deduced several known
results on torsion(-free) classes and wide subcategories in length abelian categories without using $\tau$-tilting theory.

 These works inspire us to develop a general framework for the classification of torsion-free classes in extriangulated length categories via monobricks. For this purpose, we investigate the monobricks and left  Schur subcategories in an extriangulated length category. We prove that there is a  bijection between monobricks and left  Schur subcategories  (see Theorem \ref{thm:schur-mbrick}). Subsequently, we show that this bijection restricts to bijection between cofinally closed monobricks and torsion-free classes. Then we have the following  main results of this paper.

	\begin{theoremi}\label{AA} {\rm (see Theorem \ref{thm:torfproj}~~for details)} \label{thm:B}
		Let $(\AA, \Theta)$ be an extriangulated length category. Then we have the following commutative diagram and the horizontal maps are bijections.
		\[
	\begin{tikzcd}
		\torf\AA \rar["\simp", shift left] \ar[dd, bend right=75, "1"'] \dar[hookrightarrow] & \ccmbrick\AA \lar["\Filt", shift left]\dar[hookrightarrow] \ar[dd, bend left=75, "1"]\\
		\lSchur\AA \rar["\simp", shift left] \dar["\FFF", twoheadrightarrow] & \mbrick\AA \dar["(\ov{?}) ", twoheadrightarrow] \lar["\Filt", shift left]  \\
		\torf\AA \rar["\simp", shift left] & \ccmbrick\AA \lar["\Filt", shift left]
	\end{tikzcd}
	\]
	\end{theoremi}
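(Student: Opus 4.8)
The plan is to reduce everything to the middle-row bijection $\simp \colon \lSchur\AA \leftrightarrows \mbrick\AA \colon \Filt$ supplied by Theorem~\ref{thm:schur-mbrick}, and then to show that this equivalence is compatible with the two vertical structures, so that it restricts (top row) and descends (bottom row) to the desired bijection between torsion-free classes and cofinally closed monobricks. Concretely, I would first record that $\torf\AA \subseteq \lSchur\AA$: every torsion-free class is in particular closed under the operations defining a left Schur subcategory, which legitimizes the left-hand inclusion, and dually $\ccmbrick\AA \subseteq \mbrick\AA$ gives the right-hand inclusion. The core of the top-row claim is then the pair of identities $\simp(\torf\AA) = \ccmbrick\AA$ and $\Filt(\ccmbrick\AA) = \torf\AA$; granting the middle-row bijection, it suffices to prove the single equivalence that a left Schur subcategory $\WW$ is a torsion-free class if and only if the monobrick $\simp\WW$ is cofinally closed.

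For that equivalence I would argue in both directions using the filtration description $\WW = \Filt(\simp\WW)$ valid for every left Schur subcategory. If $\WW$ is a torsion-free class, then it is closed under the subobject/cocone operation coming from conflations, and one checks that this forces $\simp\WW$ to absorb every brick that is cofinal to it, i.e. $\simp\WW$ is cofinally closed. Conversely, if $\simp\WW$ is cofinally closed, I would verify that $\Filt(\simp\WW)$ is closed under extensions (automatic, as $\Filt$-closures always are) and under the relevant cocone operation; the latter is exactly where cofinal closure is used, since a potential new subobject produced by a conflation is detected, via its brick label, as a cofinal brick that must already lie in $\simp\WW$. This establishes that the two horizontal maps restrict to a bijection $\torf\AA \leftrightarrows \ccmbrick\AA$, which is simultaneously the top and the bottom row.

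Commutativity of the diagram then splits into the two squares and the two outer ``$1$'' bends. The squares built from the vertical inclusions commute by construction, since the top-row maps are literally the restrictions of the middle-row maps. For the lower squares I would recall from the earlier sections the retraction $\FFF$ sending a left Schur subcategory to the smallest torsion-free class containing it, and show that under the horizontal bijection it intertwines with the cofinal-closure operation, namely $\simp \circ \FFF = (\ov{?}) \circ \simp$ and $\Filt \circ (\ov{?}) = \FFF \circ \Filt$. The ``$1$'' bends then merely record that $\FFF$ and $(\ov{?})$ act as the identity on $\torf\AA$ and on $\ccmbrick\AA$ respectively, which is immediate once the top-row bijection is known.

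The main obstacle I anticipate is the precise matching of the torsion-free closure $\FFF$ with the cofinal closure $(\ov{?})$, that is, proving that the simple objects of the smallest torsion-free class containing $\WW$ are exactly the cofinal closure of $\simp\WW$, with no extraneous bricks appearing. This demands a careful analysis of how passing to extension-and-cocone closures in the extriangulated setting creates new brick labels, and the crux is to show that every brick arising in the closure is already cofinal to $\simp\WW$, so that the closure operation on subcategories is faithfully mirrored by the purely combinatorial closure on monobricks.
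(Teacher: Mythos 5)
Your overall strategy coincides with the paper's: both reduce to the bijection $\simp \colon \lSchur\AA \rightleftarrows \mbrick\AA \colon \Filt$ of Theorem \ref{thm:schur-mbrick}, invoke $\torf\AA \subseteq \lSchur\AA$ and $\ccmbrick\AA\subseteq\mbrick\AA$, and then check that the bijection restricts, with the two retractions $\FFF$ and $(\ov{?})$ intertwined by $\simp$. The forward direction is also handled the same way in substance: the paper shows every element of a cofinal extension of $\simp\FF$ admits a $\Theta$-inflation into $\FF$, hence lies in $\FF$, and then uses injectivity of $\Filt$ (rather than arguing element-by-element that such objects are simple in $\FF$) to conclude $\simp\FF=\ov{\simp\FF}$; your sketch would benefit from that sandwich argument, since membership in $\FF$ does not by itself put a brick into $\simp\FF$.

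The one place where your plan, as literally stated, would not go through is the converse direction. You propose to verify directly that $\Filt(\simp\WW)$ is closed under the subobject operation by ``detecting'' a new subobject ``via its brick label'' as a brick cofinal to $\simp\WW$. But a subobject $M'\rightarrowtail S$ with $S\in\Filt\MM$ need not be a brick at all, and there is no a priori ``brick label'' to inspect; one must filter $M'$ and control all of its simple factors. The paper avoids this by a detour you do not mention: Proposition \ref{P-3-7} identifies the smallest torsion-free class as $\FFF(\MM)=\Filt(\Sub(\MM))$, Lemma \ref{lem-schur1} shows each $M\in\MM$ is left Schurian for $\Filt(\Sub(\MM))$ (giving $\MM\subseteq\simp\FFF(\MM)$), and a $\Theta$-decomposition argument shows every simple object of $\FFF(\MM)$ lies in $\Sub(\MM)$ and is left Schurian for it. Hence $\simp\FFF(\MM)$ is a cofinal extension of $\MM$, so cofinal closedness forces $\simp\FFF(\MM)=\MM$ and therefore $\Filt\MM=\FFF(\MM)\in\torf\AA$. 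This is exactly the ``main obstacle'' you flag at the end, so you have correctly located the crux, but the resolution requires the comparison with $\Filt(\Sub(\MM))$ rather than a direct closure check on $\Filt(\simp\WW)$; without it the converse direction remains a genuine gap.
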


	\begin{org}
		This paper is organized as follows.
		In Section \ref{sec:2}, we summarize some basic definitions and properties of extriangulated (length) categories.
		In Section \ref{sec:3}, we introduces the notions of left Schur subcategories and monobricks in extriangulated length categories. Then we establish a   bijection between them.
		In Section \ref{sec:4}, we
		apply our results on torsion-free classes and then prove Theorem A.

	\end{org}
	
	\begin{conv} Throughout this paper, we assume that all considered categories are skeletally small and Krull-Schmidt, and that the subcategories are full and closed under isomorphisms. For an additive category $\Cf$, we denote by $\Iso(\Cf)$ the set of isomorphism class of objects in $\Cf$.  We often identify an isomorphism class in a category with its representative. For a collection $\MM$ of objects in $\Cf$, we denote by $\add\MM $ the subcategory of $\Cf$ consisting of direct summands of finite direct sums of objects in $\MM$.
	\end{conv}

	\section{Preliminaries}\label{sec:2}
	
	In this section, we review some basic definitions and properties of extriangulated (length) categories from \cite{Na} and \cite{WLZZ}.

	\subsection{Extriangulated categories}
	Let $\AA$ be an additive category and let $\E$: $\AA^{\rm op}\times\AA\rightarrow Ab$ be a biadditive functor. For any pair of objects $A$, $C\in\AA$, an element $\delta\in \E(C,A)$ is called an {\em $\E$-extension}. The zero element $0\in\E(C,A)$ is called the {\em split $\E$-extension}.
	For any morphism $a\in \Hom_{\AA}(A,A')$ and $c\in \Hom_{\AA}(C',C)$, we have
	$$\E(C,a)(\delta)\in\E(C,A')~\text{and}~\E(c,A)(\delta)\in\E(C',A).$$
	We simply denote them by $a_{\ast}\delta$ and $c^{\ast}\delta$,~respectively.  A morphism $(a,c)$: $\delta\rightarrow\delta'$ of $\E$-extensions is a pair of morphisms $a\in  \Hom_{\AA}(A,A')$ and $c\in  \Hom_{\AA}(C,C')$ satisfying the equality $a_{\ast}\delta=c^{\ast}\delta'$. By the biadditivity of $\mathbb{E}$, we have a natural isomorphism
	$$\mathbb{E}(C\oplus C',A\oplus A')\cong\mathbb{E}(C,A)\oplus\mathbb{E}(C,A')\oplus\mathbb{E}(C',A)\oplus\mathbb{E}(C',A').$$
	Let $\delta\oplus \delta'\in\mathbb{E}(C\oplus C',A\oplus A')$ be the element corresponding to $(\delta,0,0,\delta')$ through
	this isomorphism. Two sequences of morphisms $A\stackrel{x}{\longrightarrow}B\stackrel{y}{\longrightarrow}C$ and $A\stackrel{x'}{\longrightarrow}B'\stackrel{y'}{\longrightarrow}C$ in $\AA$ are said to be {\em equivalent} if there exists an isomorphism $b\in  \Hom_{\AA}(B,B')$ such that the following diagram
	$$\xymatrix{
		A \ar@{=}[d] \ar[r]^-{x} & B\ar[d]_{b}^-{\simeq} \ar[r]^-{y} & C\ar@{=}[d] \\
		A \ar[r]^-{x'} &B' \ar[r]^-{y'} &C  }$$ is commutative.
	We denote the equivalence class of $A\stackrel{x}{\longrightarrow}B\stackrel{y}{\longrightarrow}C$ by $[A\stackrel{x}{\longrightarrow}B\stackrel{y}{\longrightarrow}C]$. In addition, for any $A,C\in\AA$, we denote as
	$$0=[A\stackrel{\tiny\begin{pmatrix} 1\\0\end{pmatrix}}{\longrightarrow}A\oplus C\stackrel{(0~1)}{\longrightarrow}C].$$
	For any two classes $[A\stackrel{x}{\longrightarrow}B\stackrel{y}{\longrightarrow}C]$ and $[A'\stackrel{x'}{\longrightarrow}B'\stackrel{y'}{\longrightarrow}C']$, we denote as
	$$[A\stackrel{x}{\longrightarrow}B\stackrel{y}{\longrightarrow}C]\oplus[A'\stackrel{x'}{\longrightarrow}B'\stackrel{y'}{\longrightarrow}C']=
	[A\oplus A'\stackrel{x\oplus x'}{\longrightarrow}B\oplus B'\stackrel{y\oplus y'}{\longrightarrow}C\oplus C'].$$
	
	\begin{definition}
		Let $\s$ be a correspondence which associates an equivalence class $\s(\delta)=[A\stackrel{x}{\longrightarrow}B\stackrel{y}{\longrightarrow}C]$ to any $\E$-extension $\delta\in\E(C,A)$. We say $\s$ is  a {\em realization} of $\E$ if for any morphism $(a,c):\delta\rightarrow\delta'$ with 
		$\s(\delta)=[A\stackrel{x}{\longrightarrow}B\stackrel{y}{\longrightarrow}C]$ and $\s(\delta')=[A'\stackrel{x'}{\longrightarrow}B'\stackrel{y'}{\longrightarrow}C']$, there exists a morphism $b\in\Hom_{\AA}(B,B')$ such that the following diagram commutative.
		$$\xymatrix{
			A \ar[d]_-{a} \ar[r]^-{x} & B  \ar[r]^{y}\ar[d]_-{b} & C \ar[d]_-{c}    \\
			A'\ar[r]^-{x'} & B' \ar[r]^-{y'} & C' .   }		$$  
		A realization $\s$ of $\E$ is said to be {\em additive} if it satisfies the following conditions:
		
		(a) For any $A,~C\in\AA$, the split $\E$-extension $0\in\E(C,A)$ satisfies $\s(0)=0$.
		
		(b) $\s(\delta\oplus\delta')=\s(\delta)\oplus\s(\delta')$ for any pair of $\E$-extensions $\delta$ and $\delta'$.
	\end{definition}

	\begin{definition} (\cite[Definition 2.12]{Na})\label{F}
		We call the triplet $(\AA, \E,\s)$ an {\em extriangulated category} if it satisfies the following conditions:\\
		$\rm(ET1)$ $\E$: $\AA^{op}\times\AA\rightarrow Ab$ is a biadditive functor.\\
		$\rm(ET2)$ $\s$ is an additive realization of $\E$.\\
		$\rm(ET3)$ Let $\delta\in\E(C,A)$ and $\delta'\in\E(C',A')$ be any pair of $\E$-extensions, realized as
		$\s(\delta)=[A\stackrel{x}{\longrightarrow}B\stackrel{y}{\longrightarrow}C]$, $\s(\delta')=[A'\stackrel{x'}{\longrightarrow}B'\stackrel{y'}{\longrightarrow}C']$. For any commutative square in $\AA$
		$$\xymatrix{
			A \ar[d]_{a} \ar[r]^{x} & B \ar[d]_{b} \ar[r]^{y} & C \\
			A'\ar[r]^{x'} &B'\ar[r]^{y'} & C'}$$
		there exists a morphism $(a,c)$: $\delta\rightarrow\delta'$ which is realized by $(a,b,c)$.\\
		$\rm(ET3)^{op}$~Dual of $\rm(ET3)$.\\
		$\rm(ET4)$~Let $\delta\in\E(D,A)$ and $\delta'\in\E(F,B)$ be $\E$-extensions realized by
		$A\stackrel{f}{\longrightarrow}B\stackrel{f'}{\longrightarrow}D$ and $B\stackrel{g}{\longrightarrow}C\stackrel{g'}{\longrightarrow}F$, respectively.
		Then there exist an object $E\in\AA$, a commutative diagram
		\begin{equation}\label{2.1}
			\xymatrix{
				A \ar@{=}[d]\ar[r]^-{f} &B\ar[d]_-{g} \ar[r]^-{f'} & D\ar[d]^-{d} \\
				A \ar[r]^-{h} & C\ar[d]_-{g'} \ar[r]^-{h'} & E\ar[d]^-{e} \\
				& F\ar@{=}[r] & F   }
		\end{equation}
		in $\AA$, and an $\E$-extension $\delta''\in \E(E,A)$ realized by $A\stackrel{h}{\longrightarrow}C\stackrel{h'}{\longrightarrow}E$, which satisfy the following compatibilities:\\
		$(\textrm{i})$ $D\stackrel{d}{\longrightarrow}E\stackrel{e}{\longrightarrow}F$ realizes $\E(F,f')(\delta')$,\\
		$(\textrm{ii})$ $\E(d,A)(\delta'')=\delta$,\\
		$(\textrm{iii})$ $\E(E,f)(\delta'')=\E(e,B)(\delta')$.\\
		$\rm(ET4)^{op}$ Dual of $\rm(ET4)$.
	\end{definition}

In this section, we always assume that 	$\AA:= (\AA,\E,\s)$ is an extriangulated category.

	\begin{example}\label{E-2.3}

	$(1)$ Let $\AA$ be an exact category. For any $A,C\in\AA$, we define $\mathbb{E}(C, A)$ to be the collection of all equivalence classes of short exact sequences of the form  $A\stackrel{}{\longrightarrow}B\stackrel{}{\longrightarrow}C$. For any $\delta\in\mathbb{E}(C, A)$, the realization $\s(\delta)$ is defined as $\delta$ itself. Then $(\AA, \mathbb{E}, \s)$ is an extriangulated category. We refer the reader to \cite[Example 2.13]{Na} for more details.
	
	$(2)$ Let $\TT$ be a triangulated category with suspension functor $\Sigma$. For any $A,C\in\TT$, we define $\mathbb{E}(C, A):=\Hom_{\TT}(C,\Sigma A)$. For any $\delta\in\mathbb{E}(C, A)$,  take a distinguished triangle
	\[
	A\stackrel{}{\longrightarrow}B\stackrel{}{\longrightarrow}C\stackrel{\delta}{\longrightarrow} \Sigma A
	\]
	and define  $\s(\delta) = [A\stackrel{x}{\longrightarrow}B\stackrel{y}{\longrightarrow}C]$. Then $(\TT, \mathbb{E}, \s)$ is an extriangulated category.  We refer the reader to \cite[Example 2.13]{Na} for more details.
	
	$(3)$   Let $\TT$ be an extension-closed subcategory of $\AA$. We define $\E_\TT$ to be the restriction of
	$\E$ onto $\TT^{\rm op}\times \TT$ and define $\E_\TT$ by restricting $\s$ on $\E_\TT$. Then ($\TT,\E_\TT,\s_\TT$) is 
	an extriangulated category  (cf. \cite[Remark 2.18]{Na}).	
\end{example}
	
 Given $\delta\in\E(C,A)$. If $\s(\delta)=[A\stackrel{x}{\longrightarrow}B\stackrel{y}{\longrightarrow}C]$, then the sequence $A\stackrel{x}{\longrightarrow}B\stackrel{y}{\longrightarrow}C$ is called a {\em conflation}, $x$ is called an {\em inflation} and $y$ is called a {\em deflation}. In this case, we call $A\stackrel{x}{\longrightarrow}B\stackrel{y}{\longrightarrow}C\stackrel{\delta}\dashrightarrow$ is an $\E$-triangle.
	We will write $A=\cocone(y)$ and $C=\cone(x)$ if necessary. Let $\TT,\mathcal{F}$ be two subcategories of  $\AA$. We define
	$$\TT^{\perp}=\{M\in\AA~|~\Hom_{\AA}(X,M)=0~\text{for any}~X\in\TT\},$$
	$$^{\perp}\TT=\{M\in\AA~|~\Hom_{\AA}(M,X)=0~\text{for any}~X\in\TT\},$$
	$$\TT\ast \FF=\{M\in\AA \mid \text{there exists an } \E\text{-triangle }  T\stackrel{}{\longrightarrow}M\stackrel{}{\longrightarrow}F\stackrel{}\dashrightarrow \text{with}~T\in\TT,F\in\FF\}.$$
	We say that a subcategory $\TT \subseteq \AA$ is {\em extension-closed} if $\TT\ast \TT \subseteq \TT$.

	\subsection{Extriangulated length categories}
	We begin with review the definition of the length categories and some related results from  \cite{WLZZ}.
	
	\begin{definition}\label{D-1} (\cite[Definition 3.1]{WLZZ}) We say that a map $\Theta:\Iso(\AA)\rightarrow \N$ is a {\em length function} on $\AA$ if it satisfies the following conditions:
		\begin{enumerate} 
			\item $\Theta(X)=0$ if and only if $X\cong0$.
			\item For any $\E$-triangle $X\stackrel{}{\longrightarrow}L\stackrel{}{\longrightarrow}M\stackrel{\delta}\dashrightarrow$ in $\AA$, we have $\Theta(L)\leq \Theta(X)+\Theta(M)$. In addition, if $\delta=0$, then $\Theta(L)=\Theta(X)+\Theta(M)$.
		\end{enumerate} 
		A length function $\Theta$ is said to {\em stable} if
		it satisfies the condition $\Theta(L)=\Theta(X)+\Theta(M)$ for any $\E$-triangle $X\stackrel{}{\longrightarrow}L\stackrel{}{\longrightarrow}M\stackrel{}\dashrightarrow$ in $\AA$.
	\end{definition}
	
	Let $\Theta$ be a length function on $\AA$. We say  an $\E$-triangle $X\stackrel{x}{\longrightarrow}L\stackrel{y}{\longrightarrow}M\stackrel{\delta}\dashrightarrow$ in $\AA$ is {\em $\Theta$-stable} (or {\em stable} when $\Theta$ is clear) if $\Theta(L) = \Theta(X) +\Theta(M)$. In this case, $x$ is called a  {\em $\Theta$-inflation}, $y$ is called a   {\em $\Theta$-deflation} and $\delta$ is called a  $\Theta$-extension.  In analogy with short exact sequences, we depict $\Theta$-inflation by  $\rightarrowtail $ and  $\Theta$-deflation by $\twoheadrightarrow $.  A morphism $f:M\rightarrow N$ in $\AA$ is called {\em $\Theta$-admissible} if $f$ admits a
	{\em $\Theta$-decomposition} $(i_{f}, X_{f},j_{f})$, i.e. there is a commutative diagram
	\begin{equation*}
		\xymatrix{
			M\ar[rr]^{f}\ar@{->>}[dr]_-{i_{f}} & &N \\
			&   X_{f} \ar@{>->}[ur]_{j_{f}}   &      }
	\end{equation*}
	such that $i_{f}:M\twoheadrightarrow X_{f}$ is a $\Theta$-deflation and $j_{f}:X_{f}\rightarrowtail N$ is a $\Theta$-inflation. The following observation is simple but useful.

	\begin{remark}\label{R-2-4} Let  $\Theta$  be a  length function on $\AA$. Take a $\Theta$-stable $\E$-triangle 	$X\stackrel{f}\rightarrowtail L\stackrel{g}\twoheadrightarrow M\stackrel{}\dashrightarrow$. It is easily check that $\Theta(X)=\Theta(L)$ if and only if $f$ is an isomorphism. Similarly, we have $\Theta(L)=\Theta(M)$ if and only if $g$ is an isomorphism. 
	\end{remark}

	\begin{definition}  \cite[Definition 3.8]{WLZZ} Let  $\Theta$  be a  length function on $\AA$. We say that $((\AA, \E,\s),\Theta)$ is an  {\em  extriangulated length category},  or simply a {\em length category}, if every morphism in $\AA$ is $\Theta$-admissible.
		For simplicity, we often write  $(\AA,\Theta)$ for  $((\AA, \E,\s),\Theta)$ when $\E$ and $\s$ are clear from the context.
	\end{definition}
	
	Let $\XX$ be a collection of objects in $\AA$. The {\em filtration subcategory} $\aFilt\XX$ is consisting of all objects $M$ admitting a finite filtration of the form
	\begin{equation*}
		0=M_{0}\stackrel{f_{0}}{\longrightarrow}M_{1}\stackrel{f_{1}}{\longrightarrow}M_{2}{\longrightarrow}\cdots\xrightarrow{f_{n-1}}M_{n}=M
	\end{equation*}
	with $f_{i}$ being an inflation and $\cone(f_{i})\in\XX$ for any $0\leq i\leq n-1$.  For each object $M\in\aFilt\XX$, the minimal length of $\XX$-{filtrations} of $M$ is called the $\XX$-{\em length} of $M$, which is denoted by $l_{\XX}(M)$. It was proven in \cite[Lemma 2.8]{Wa2} that $\aFilt\XX$ is the smallest extension-closed subcategory in $\AA$ containing $\XX$.
	
	An object $M\in\AA$ is called a {\em brick} if its endomorphism ring is a division ring. Let  $\XX$ be a set of isomorphism classes of bricks in $\AA$. We say $\XX$ is a {\em semibrick} if $\Hom_{\AA}(X_1,X_2)=0$ for any two non-isomorphic objects $X_1,X_2$ in $\XX$. If moreover $\AA=\aFilt\XX$, then we say $\XX$ is a {\em simple-minded system} in $\AA$. 
	A semibrick $\XX$ is said to be {\em proper} if for any $X\in\XX$,
	there does not exist an $\E$-triangle of the form $X\stackrel{}{\longrightarrow}0\stackrel{}{\longrightarrow}N\stackrel{}\dashrightarrow$ with $N\in\aFilt\XX$.
	
	Let $(\AA,\Theta)$ be a length category. Set
	$$\Theta_{1}:=\{0\neq M\in\Iso(\AA)~|~0<\Theta(M)\leq \Theta(N)~\text{for any}~0\neq N\in\Iso(\AA)\}.$$
	Without loss of generality, we may assume that $\Theta(M)=1$ for any $M\in\Theta_{1}$. For $n\geq2$, we  inductively define various sets  as follows:
	$$\Theta'_{n}=\{M\in \AA~|~M\in\Theta_{n-1}^{\perp}\bigcap{^{\perp}}\Theta_{n-1},\Theta(M)=n\}~\text{and}~\Theta_{n}=\Theta_{n-1}\bigcup\Theta'_{n}.$$
	Set $\Theta_{\infty}=\bigcup_{n\geq1}\Theta_{n}$. We have the following useful characterization of length categories by using semibricks.
	
	\begin{theorem}\label{main0}  Let $\AA$ be an extriangulated category.
		\begin{enumerate} 
			\item If $(\AA,\Theta)$ is a length category, then $\Theta_{\infty}$ is a simple-minded system in $\AA$.
			\item If $\XX$ is a simple-minded system in $\AA$, then  $(\AA,l_{\XX})$ is a length category. In addition, $l_{\XX}$ is stable if and only if $\XX$ is proper.	
		\end{enumerate} 
	\end{theorem}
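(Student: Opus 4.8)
The plan is to prove the two halves of "simple-minded system'' separately: that $\aFilt\Theta_\infty=\AA$, and that $\Theta_\infty$ is a semibrick of bricks. For the first I would show by induction on $n$ that every $M$ with $\Theta(M)\le n$ lies in $\aFilt\Theta_n$. In the inductive step, if $M\in\Theta_{n-1}^{\perp}\cap{}^{\perp}\Theta_{n-1}$ then $M\in\Theta_n'\subseteq\aFilt\Theta_n$; otherwise there is $X\in\Theta_{n-1}$ with, say, $\Hom(M,X)\ne 0$ (the case $\Hom(X,M)\ne 0$ is dual). Factoring a nonzero such map through its $\Theta$-admissible image $M\twoheadrightarrow X_f\rightarrowtail X$ yields a $\Theta$-stable $\E$-triangle $K\rightarrowtail M\twoheadrightarrow X_f\dashrightarrow$; since the inflation gives $\Theta(X_f)\le\Theta(X)\le n-1$ and $X_f\ne 0$ (the map is nonzero), stability forces $\Theta(K),\Theta(X_f)<\Theta(M)$. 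By the inductive hypothesis $K,X_f\in\aFilt\Theta_{n-1}$, and extension-closedness of $\aFilt$ gives $M\in\aFilt\Theta_{n-1}\subseteq\aFilt\Theta_n$. As $\Theta$ takes finite values, $\AA=\bigcup_n\aFilt\Theta_n=\aFilt\Theta_\infty$.

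The semibrick property rests on one auxiliary lemma: $M\in{}^{\perp}\Theta_{n-1}$ already forces $\Hom(M,Z)=0$ for every $Z\in\aFilt\Theta_{n-1}$ (and dually for $\Theta_{n-1}^{\perp}$). I would prove this by inducting along an $\XX$-filtration of $Z$, applying $\Hom(M,-)$ to each defining $\E$-triangle and using the resulting long exact sequence. Granting it, write $\Theta_\infty=\bigsqcup_a\Theta_a'$ stratified by length. For $M_1\in\Theta_a'$, $M_2\in\Theta_b'$ with $a<b$ both $\Hom(M_1,M_2)$ and $\Hom(M_2,M_1)$ vanish straight from $M_2\in\Theta_{b-1}^{\perp}\cap{}^{\perp}\Theta_{b-1}$ and $M_1\in\Theta_{b-1}$. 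For the same stratum, a nonzero $g\colon M_1\to M_2$ factors through its image $X_g$, and either $\Theta(X_g)=a$—forcing both factors to be isomorphisms by Remark \ref{R-2-4}, whence $M_1\cong M_2$—or $\Theta(X_g)<a$, so $X_g\in\aFilt\Theta_{a-1}$ while the nonzero deflation $M_1\twoheadrightarrow X_g$ contradicts $M_1\in{}^{\perp}\aFilt\Theta_{a-1}$ (for $a=1$ the second alternative forces $X_g=0$, i.e.\ $g=0$, directly). The same dichotomy applied to endomorphisms shows each member of $\Theta_a'$ is a brick. Hence $\Theta_\infty$ is a simple-minded system.

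\textbf{Part (2).} Here $l_\XX(M)=0\iff M\cong 0$ is immediate from the definition. For subadditivity along an $\E$-triangle $X\rightarrowtail L\twoheadrightarrow M$, I would lift a minimal $\XX$-filtration of $M$ through the deflation $L\twoheadrightarrow M$, forming successive pullbacks via (ET4)$^{\mathrm{op}}$, and concatenate with a minimal filtration of $X$; this produces an $\XX$-filtration of $L$ of length $l_\XX(X)+l_\XX(M)$, so $l_\XX(L)\le l_\XX(X)+l_\XX(M)$. In the split case the same concatenation gives $\le$, and the reverse inequality $l_\XX(A\oplus B)\ge l_\XX(A)+l_\XX(B)$—together with the admissibility of an arbitrary $f$, built by inducting on $l_\XX$ to construct its image factorization $A\twoheadrightarrow X_f\rightarrowtail B$—I would extract from a Schreier/Jordan--Hölder-type refinement of $\XX$-filtrations, in which the semibrick hypothesis forces every morphism between members of $\XX$ to be zero or an isomorphism, so that induced subquotients of a filtration factor are again factors.

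Finally, for the equivalence: if $\XX$ is not proper there is an $\E$-triangle $X\rightarrowtail 0\twoheadrightarrow N$ with $X\in\XX$ and $N\in\aFilt\XX$, and since $l_\XX(0)=0<1\le l_\XX(X)+l_\XX(N)$ this triangle is not stable, so $l_\XX$ is not stable. Conversely, assuming properness I would upgrade subadditivity to equality by proving that no $\XX$-filtration can be shortened—equivalently, that all $\XX$-filtrations of a fixed object share one length. I expect this Jordan--Hölder statement, and the refinement machinery behind split-additivity and admissibility, to be the main obstacle: in the extriangulated setting one must track inflations and deflations by hand (inflations need not be monic in general) instead of leaning on an ambient abelian structure, and properness is exactly the hypothesis ruling out the degenerate triangles above, which are precisely what would let a factor of $\XX$ be absorbed into a strictly shorter filtration.
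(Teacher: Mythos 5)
First, note that the paper does not actually prove Theorem~\ref{main0}: its proof is the single sentence ``This follows from [WLZZ, Lemma 3.3, Theorem 3.9, Proposition 3.13]'', so there is no in-paper argument to measure yours against, only the imported results. Judged on its own terms, your Part (1) is essentially a complete and correct argument: the induction showing that every object of length at most $n$ lies in $\aFilt\Theta_n$ is sound (the one point worth spelling out is that $K\neq 0$ in the stable $\E$-triangle $K\rightarrowtail M\twoheadrightarrow X_f$, which holds because $\Theta(X_f)\le n-1<\Theta(M)$); the auxiliary vanishing lemma for $\aFilt\Theta_{n-1}$ follows correctly from the three-term exact sequence obtained by applying $\Hom(M,-)$ to the filtration triangles; and the image-factorization dichotomy, combined with Remark~\ref{R-2-4}, yields both the brick condition and the orthogonality between non-isomorphic elements.

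Part (2), however, is a plan rather than a proof, and the pieces you defer are precisely the substance of the cited results. Three things are needed and not established: (i) the reverse inequality $l_\XX(A\oplus B)\ge l_\XX(A)+l_\XX(B)$ required by condition (2) of Definition~\ref{D-1} in the split case; (ii) that every morphism of $\AA$ is $l_\XX$-admissible --- this is the entire content of ``$(\AA,l_\XX)$ is a length category'' beyond the numerics, and since morphisms in a triangulated ambient category have no kernels, cokernels or images, ``inducting on $l_\XX$ to construct the image factorization'' is exactly where all the work lies; and (iii) the Jordan--H\"older statement for $\XX$-filtrations underlying properness $\Rightarrow$ stability. Moreover, the route you propose for (i) and (iii), a Schreier-type common refinement, is doubtful in this generality: refinement theorems rest on intersecting subobjects (the Zassenhaus lemma), and an extriangulated category has no subobject lattice to intersect, so some other mechanism is required. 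You flag this yourself as ``the main obstacle'', and that self-assessment is accurate; as it stands only the easy implication (not proper $\Rightarrow$ not stable, via $l_\XX(0)=0<l_\XX(X)+l_\XX(N)$) is actually complete, so the theorem is not proved by the proposal.
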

	\begin{proof} This follows from \cite[Lemma 3.3, Theorem 3.9, Proposition 3.13]{WLZZ}.
	\end{proof}
	
	\begin{example}\label{E-2.8} $(1)$ Let $\AA$ be an abelian length category. Observe that the set ${\rm sim}(\AA)$ consisting of the isomorphism classes of simple objects in $\AA$ is a simple-minded system. Then Theorem \ref{main0} implies that $(\AA,l_{{\rm sim}(\AA)})$ is a stable length category.
		
		$(2)$ Let $\Lambda$ be a finite dimensional algebra of finite global dimension. By \cite[Example 2]{Du}, there exists a simple-minded system $\XX$ in bounded derived category $D^{b}(\Lambda)$. Then $(D^{b}(\Lambda),l_{\XX})$ is a length category. We refer the reader to \cite[Example 3.25]{WLZZ} for more details.
	\end{example}
	
	We collect some results on length categories which we will need.

	\begin{proposition}\label{pro-length-cat} Let $(\AA,\Theta)$ be a length category. 
		\begin{enumerate} 
			\item The classes of $\Theta$-inflations (resp. $\Theta$-deflations) is closed under compositions.
			\item Let $f:X\rightarrow Y$ and  $g:Y\rightarrow Z$ be any composable pair of morphisms in $\AA$. If $gf$ is a $\Theta$-inflation, then so is $f$. Dually, if $gf$ is a $\Theta$-deflation, then so is $g$.
		\end{enumerate} 
	\end{proposition}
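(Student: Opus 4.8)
The plan is to prove the statements about $\Theta$-inflations and obtain those about $\Theta$-deflations by the dual argument (replacing (ET4) by (ET4)$^{\mathrm{op}}$ and reversing arrows throughout).

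For part (1), let $x\colon X\rightarrowtail L$ and $x'\colon L\rightarrowtail L'$ be $\Theta$-inflations, realised by $\Theta$-stable $\E$-triangles $X\xrightarrow{x}L\to M\dashrightarrow$ and $L\xrightarrow{x'}L'\to M'\dashrightarrow$, so that $\Theta(L)=\Theta(X)+\Theta(M)$ and $\Theta(L')=\Theta(L)+\Theta(M')$. First I would feed these two $\E$-triangles into (ET4): this produces an object $E$, an $\E$-triangle $X\xrightarrow{x'x}L'\to E\dashrightarrow$ realising the composite, together with an $\E$-triangle $M\to E\to M'\dashrightarrow$. Applying the length axiom (Definition \ref{D-1}(2)) to the latter gives $\Theta(E)\le\Theta(M)+\Theta(M')$, and to the former gives $\Theta(L')\le\Theta(X)+\Theta(E)$. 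Combining with the two displayed equalities,
\[
\Theta(X)+\Theta(M)+\Theta(M')=\Theta(L')\le\Theta(X)+\Theta(E)\le\Theta(X)+\Theta(M)+\Theta(M'),
\]
so every inequality is an equality; in particular $\Theta(L')=\Theta(X)+\Theta(E)$, which says exactly that the $\E$-triangle $X\xrightarrow{x'x}L'\to E\dashrightarrow$ is $\Theta$-stable, i.e. $x'x$ is a $\Theta$-inflation. The only care here is to match the data of the two triangles with the hypotheses of (ET4) in the correct order.

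The single non-formal ingredient for part (2) is the following lemma, which I would record separately: if the zero morphism $0\colon A\to B$ is a $\Theta$-inflation, then $A\cong0$. To prove it, realise $0$ by a $\Theta$-stable $\E$-triangle $A\xrightarrow{0}B\xrightarrow{y}C\dashrightarrow$, so $\Theta(B)=\Theta(A)+\Theta(C)$. In the contravariant long exact sequence obtained by applying $\Hom(-,B)$ to this $\E$-triangle, the map $\Hom(C,B)\xrightarrow{y^{*}}\Hom(B,B)$ is surjective, since it is immediately preceded by $0^{*}=0$; lifting $\Id_B$ produces a retraction of $y$, so $y$ is a split monomorphism. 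As $\AA$ is Krull--Schmidt we may write $C\cong B\oplus C'$, and the resulting split $\E$-triangle $B\to C\to C'\dashrightarrow$ has vanishing extension, whence $\Theta(C)=\Theta(B)+\Theta(C')$ by Definition \ref{D-1}(2). Substituting into the stability equality gives $\Theta(A)+\Theta(C')=0$, so $\Theta(A)=0$ and $A\cong0$ by Definition \ref{D-1}(1). The dual statement, that a zero $\Theta$-deflation has zero target, follows dually.

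For part (2), suppose $gf$ is a $\Theta$-inflation. Using $\Theta$-admissibility I would factor $f=j_f\, i_f$ with $i_f\colon X\twoheadrightarrow X_f$ a $\Theta$-deflation and $j_f\colon X_f\rightarrowtail Y$ a $\Theta$-inflation, and set $K=\cocone(i_f)$, giving a $\Theta$-stable $\E$-triangle $K\xrightarrow{k}X\xrightarrow{i_f}X_f\dashrightarrow$ in which $k$ is a $\Theta$-inflation. Since $i_f k=0$ we get $(gf)\,k=g\,j_f\,i_f\,k=0$. But $gf$ and $k$ are both $\Theta$-inflations, so by part (1) their composite $(gf)\,k=0\colon K\to Z$ is a $\Theta$-inflation, and the lemma forces $K\cong0$. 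By Remark \ref{R-2-4} this means $i_f$ is an isomorphism, whence $f=j_f\,i_f$ is a $\Theta$-inflation. The dual argument — factoring $g=j_g\, i_g$, composing $gf$ on the left with the $\Theta$-deflation $Z\twoheadrightarrow\cone(j_g)$, and invoking part (1) together with the dual lemma — shows that $g$ is a $\Theta$-deflation whenever $gf$ is. I expect the crux to be the lemma on zero $\Theta$-inflations: everything else is a formal consequence of (ET4), part (1), and the additivity of $\Theta$, whereas the lemma is the one place where stability of the length function is genuinely used to exclude a nontrivial source.
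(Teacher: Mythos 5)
Your argument is correct. Note first that the paper does not actually prove this proposition: it only cites \cite[Lemmas 3.6 and 3.20]{WLZZ}, so there is no in-paper argument to compare against step by step. Your reconstruction is the natural one and it checks out. For (1), feeding the two $\Theta$-stable conflations into (ET4) produces the conflations $X\xrightarrow{x'x}L'\to E$ and $M\to E\to M'$, and the squeeze $\Theta(X)+\Theta(M)+\Theta(M')=\Theta(L')\le\Theta(X)+\Theta(E)\le\Theta(X)+\Theta(M)+\Theta(M')$ forces $\Theta(L')=\Theta(X)+\Theta(E)$, as you say. Your key lemma for (2) --- a zero $\Theta$-inflation has zero source --- is also correct: exactness of $\Hom(C,B)\xrightarrow{y^*}\Hom(B,B)\xrightarrow{0^*}\Hom(A,B)$ splits $y$, the resulting split conflation $B\to C\to C'$ has vanishing extension so Definition~\ref{D-1}(2) gives $\Theta(C)=\Theta(B)+\Theta(C')$, and stability of the original triangle then kills $\Theta(A)$; combined with $\Theta$-admissibility of $f$, part (1), and Remark~\ref{R-2-4} this yields (2). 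Two small remarks: you silently use the long exact $\Hom$-sequences and the fact that consecutive maps in an $\E$-triangle compose to zero, which are standard for extriangulated categories (Nakaoka--Palu) but are not recorded in this paper, so they should be cited; and your closing sentence about ``stability of the length function'' is loosely phrased --- what is used is the $\Theta$-stability of the particular conflation realizing the zero inflation, not stability of $\Theta$ as a length function (which fails in general).
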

	\begin{proof}The reader can find the statement (1) in \cite[Lemma 3.6]{WLZZ} and (2) in \cite[Lemma 3.20]{WLZZ}. 
	\end{proof}

	\begin{remark}\label{filt-0}
		Let $((\AA, \E,\s),\Theta)$ be a length category. In general, the length function $\Theta$ is not stable. For any $A,B\in \AA$, we define $\E_{\Theta}(A,B)$ to be the subset of $\E(A,B)$  consisting of all $\Theta$-extensions. We define $\s_{\Theta}$ as the restriction of $\s$ on $\E_{\Theta}$. It was proven in \cite[Proposition 3.22]{WLZZ} that $((\AA, \E_{\Theta},\s_{\Theta}),\Theta)$ is a stable length category. 	
		
		Let $\XX$ be a collection of objects in $\AA$. We denote by $\Filt\XX$ the filtration subcategory generated by $\XX$ in $(\AA, \E_{\Theta},\s_{\Theta})$. It is obvious that  $\Filt\XX\subseteq\aFilt\XX$. Note that $\Theta$ is stable in $(\AA, \E_{\Theta},\s_{\Theta})$. By \cite[Proposition 2.2(3)]{WLZZ}, any object $M\in\Filt\XX$ have  two stable $\E$-triangles
		$$X_{1}\stackrel{}\rightarrowtail M\stackrel{}\twoheadrightarrow M'\stackrel{}\dashrightarrow~\text{and}~~M''\stackrel{}\rightarrowtail M\stackrel{}\twoheadrightarrow X_2\stackrel{}\dashrightarrow$$
		such that $X_{1},X_{2}\in \mathcal{X}$. For two subcategories $\TT,\mathcal{F}\subseteq\AA$, we define
		$$\TT\ast_{\Theta} \FF=\{M\in\AA \mid \text{there exists an } \E\text{-triangle}~  T\stackrel{}{\rightarrowtail} M\stackrel{}{\twoheadrightarrow} F\stackrel{}\dashrightarrow \text{with}~ T\in\TT,F\in\FF\}.$$
		It is obvious that $\TT\ast_{\Theta} \FF\subseteq \TT\ast \FF$. We say a subcategory $\TT$ of $\AA$ is {\em closed under $\Theta$-extensions} if $\TT\ast_{\Theta} \TT \subseteq \TT$. Then one can check that $\Filt\XX$ is closed under $\Theta$-extensions.
	\end{remark}

	\section{Monobricks  and left Schur subcategories}\label{sec:3}
	
	In what follows, we always assume that 	$(\AA, \Theta):= ((\AA,\E,\s), \Theta)$ is a length category. The aim of this section is to investigate the monobricks and  left Schur subcategories in length categories. We start with introducing the following notations.

	\begin{definition}\label{monobrick}
		Let $\MM$ be a set of isomorphism classes of bricks in $\AA$. We say $\MM$ is a {\em monobrick} if every morphism between elements of $\MM$ is either zero or a $\Theta$-inflation in $\AA$.
	\end{definition}
	
	We denote by $\mbrick \AA$ (resp. $\sbrick \AA$) the set of monobricks (resp. semibricks) in $\AA$. Then clearly we have  $\sbrick \AA \subseteq \mbrick \AA$. 
	
	\begin{remark} (1) Let $\AA$ be an abelian length category. Recall that $(\AA,l_{{\rm sim}(\AA)})$ is a stable length category (see Example \ref{E-2.8}). In this case,  our definition coincides with that in \cite[Definition 2.1(1)]{Eno}.
		
		$(2)$ It is natural to define an {\em epibrick} in length categories. That is, a set $\MM$ of isomorphism classes of bricks satisfies every morphism between elements of $\MM$ is either zero or a $\Theta$-deflation in $\AA$. 	We denote by $\ebrick \AA$ the set of epibricks in $\AA$.
	\end{remark}

	Let $\mathcal{W}$ be a subcategory of $\AA$. A non-zero object $M\in\mathcal{W}$ is called a {\em simple object}  (c.f. \cite[Section 3.3]{WLZZ}) if there does not exist an $\E$-triangle
	$A\stackrel{}\rightarrowtail M\stackrel{}\twoheadrightarrow B\stackrel{}\dashrightarrow$ in $\mathcal{W}$ such that $A,B\neq 0$. We denote by $\simp(\mathcal{W})$ the set of isomorphism classes of simple objects in $\mathcal{W}$.  
	\begin{definition}\label{left-schurian} Let $\EE$ be a subcategory of $\AA$.
		\begin{enumerate} 		
			\item We say a non-zero object $M \in \AA$ is {\em left Schurian for $\EE$} if every morphism $M \rightarrow C$ with $C \in \EE$ is either zero or a $\Theta$-inflation.
			\item We say $\EE$ is a {\em left Schur subcategory} of $\AA$ if it satisfies the following conditions:
	 	  \begin{enumerate} 
		  	  \item $\EE$ is closed under  $\Theta$-extensions.
			  \item If $M\in\simp(\EE)$, then $M$ is left Schurian for $\EE$.
		  \end{enumerate} 
		\end{enumerate} 
	\end{definition}
The notion of  {\em right Schurian subcategories} is defined dually. 	We denote by $\lSchur\AA$ (resp. $\rSchur\AA$) the set of left (resp. right) Schur subcategories of $\AA$.

	\begin{lemma}\label{schurian-simple}
		Let $\EE\in\lSchur\AA$.
		\begin{enumerate}
			\item The set $\simp(\EE)$ is a monobrick.
			\item If $M\in\EE$ is left Schurian for $\EE$, then $M \in\simp(\EE)$.
		\end{enumerate}
	\end{lemma}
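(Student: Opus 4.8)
The plan is to treat the two statements separately, using Remark~\ref{R-2-4}, the additivity of $\Theta$ on split $\E$-triangles, and the long exact sequences attached to $\E$-triangles as the only genuine tools.

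For (1) I would first check that each $M\in\simp(\EE)$ is a brick. Since $M\in\simp(\EE)$, Definition~\ref{left-schurian}(2)(b) tells us $M$ is left Schurian for $\EE$, so (as $M\in\EE$) every nonzero $f\in\End(M)$ is forced to be a $\Theta$-inflation. Realizing $f$ by a $\Theta$-stable $\E$-triangle $M\xrightarrow{f}M\twoheadrightarrow\cone(f)\dashrightarrow$ and using stability gives $\Theta(\cone(f))=0$, so $\cone(f)\cong0$; then Remark~\ref{R-2-4}, applied to the equality $\Theta(M)=\Theta(M)$, shows $f$ is an isomorphism. Hence $\End(M)$ is a division ring and $M$ is a brick. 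For the defining property of a monobrick, given $M_{1},M_{2}\in\simp(\EE)$ and a nonzero $f\colon M_{1}\to M_{2}$, the left Schurianness of $M_{1}$ (Definition~\ref{left-schurian}(1)) together with $M_{2}\in\EE$ forces $f$ to be a $\Theta$-inflation. Thus every morphism between elements of $\simp(\EE)$ is zero or a $\Theta$-inflation, and $\simp(\EE)\in\mbrick\AA$.

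For (2) I would argue by contradiction. Suppose $M$ is left Schurian for $\EE$ but $M\notin\simp(\EE)$; then there is a $\Theta$-stable $\E$-triangle $A\xrightarrow{g}M\xrightarrow{h}B\dashrightarrow$ with $A,B\in\EE$ and $A,B\neq0$. Stability gives $\Theta(M)=\Theta(A)+\Theta(B)$ with both summands strictly positive, so $\Theta(A),\Theta(B)<\Theta(M)$. Because $B\in\EE$ and $M$ is left Schurian, the $\Theta$-deflation $h$ is either zero or a $\Theta$-inflation. If $h$ were a $\Theta$-inflation, then $\Theta(B)=\Theta(M)+\Theta(\cone(h))\geq\Theta(M)$, contradicting $\Theta(B)<\Theta(M)$.

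The remaining case $h=0$ is where I expect the real difficulty, since in an extriangulated category a $\Theta$-deflation need not be an epimorphism, so no contradiction can be read off directly from $h$. Instead I would apply $\Hom_{\AA}(M,-)$ to the triangle and use exactness at $\Hom_{\AA}(M,M)$ (a standard property of extriangulated categories, see \cite{Na}): as $h_{\ast}=0$, the map $g_{\ast}\colon\Hom_{\AA}(M,A)\to\Hom_{\AA}(M,M)$ is surjective, so $\mathrm{id}_{M}$ factors through $g$ and $g$ is a split epimorphism. Since the ambient category is Krull--Schmidt, this gives $A\cong M\oplus A'$, and additivity of $\Theta$ on the split $\E$-triangle $M\rightarrowtail M\oplus A'\twoheadrightarrow A'$ (Definition~\ref{D-1}(2)) yields $\Theta(A)=\Theta(M)+\Theta(A')\geq\Theta(M)$, contradicting $\Theta(A)<\Theta(M)$. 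Both cases being impossible, no such triangle exists and $M\in\simp(\EE)$; the key nonformal input is thus the splitting of $g$ extracted from the long exact sequence, which substitutes for the epimorphism property that fails at this level of generality.
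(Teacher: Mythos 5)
Your proof is correct, and for part (1) and the overall case split in part (2) it follows the same route as the paper: simples are left Schurian by Definition~\ref{left-schurian}(2)(b), which immediately gives the monobrick property, and for (2) one analyses whether the deflation $h\colon M\twoheadrightarrow B$ (with $B\in\EE$) is zero or a $\Theta$-inflation, the latter case being killed by comparing $\Theta(B)$ with $\Theta(M)$ exactly as you do. The one place you genuinely diverge is the case $h=0$, which you rightly identify as the delicate step. The paper disposes of it in one line: a zero $\Theta$-deflation forces its target to vanish (cited to Remark~\ref{R-2-4}; concretely, $0\colon M\to B$ factors through the zero object, so Proposition~\ref{pro-length-cat}(2) makes $0\to B$ a $\Theta$-deflation and stability gives $\Theta(B)\le\Theta(0)=0$). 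You instead run the long exact sequence of $\Hom_{\AA}(M,-)$ from \cite{Na}, deduce that $g$ is a split epimorphism, invoke Krull--Schmidt to write $A\cong M\oplus A'$, and contradict $\Theta(A)<\Theta(M)$ via additivity of $\Theta$ on split extensions. Both arguments are valid; the paper's is shorter and stays entirely inside the length-function formalism, while yours makes explicit why the failure of deflations to be epimorphisms causes no harm and is arguably more transparent about where stability of the triangle is actually used. Your extra verification in (1) that elements of $\simp(\EE)$ are bricks (via $\Theta(\cone(f))=0$ and Remark~\ref{R-2-4}) is a point the paper leaves implicit, so including it is a small improvement rather than a detour.
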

	\begin{proof} (1)   Take a non-zero morphism $f:M \rightarrow N$ with $M,N\in\simp(\EE)$. Then Definition \ref{left-schurian} implies that $f$ is a $\Theta$-inflation. 
		
		(2) Let
		$L\stackrel{}\rightarrowtail M\stackrel{f}\twoheadrightarrow N\stackrel{}\dashrightarrow$ be an $\E$-triangle in $\EE$. By hypothesis, either $f=0$ or $f$ is a $\Theta$-inflation. In the former case, we have $N\cong 0$ by Remark \ref{R-2-4}. In the latter, we infer that $f$ is an isomorphism and thus $L\cong 0$. This implies that $M \in\simp(\EE)$.
	\end{proof}

	\begin{lemma}\label{lem-schur1} Let $\mathcal{C}$ and $\DD$ be two subcategories of $\AA$ and let $M\in\AA$. If $M$ is left Schurian for $\mathcal{C}$ and $\DD$, then $M$ is left Schurian for $\mathcal{C}\ast_{\Theta}\DD$.
	\end{lemma}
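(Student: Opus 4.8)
The plan is to unwind the definition of $\mathcal{C}\ast_{\Theta}\DD$ and reduce any test morphism out of $M$ to morphisms landing in $\mathcal{C}$ and in $\DD$, where the left Schurian hypothesis already applies. Fix an object $N\in\mathcal{C}\ast_{\Theta}\DD$; by definition there is a stable $\E$-triangle $T\stackrel{a}\rightarrowtail N\stackrel{b}\twoheadrightarrow F\stackrel{}\dashrightarrow$ with $T\in\mathcal{C}$ and $F\in\DD$. Let $f\colon M\to N$ be an arbitrary morphism; I must show that $f$ is either zero or a $\Theta$-inflation. The natural object to inspect first is the composite $bf\colon M\to F$, since $F\in\DD$ and $M$ is left Schurian for $\DD$.

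By the left Schurian hypothesis for $\DD$, the morphism $bf$ is either a $\Theta$-inflation or zero, and I would split into these two cases. If $bf$ is a $\Theta$-inflation, then since $bf=b\circ f$, Proposition \ref{pro-length-cat}(2) immediately forces $f$ itself to be a $\Theta$-inflation. If instead $bf=0$, I would factor $f$ through the inflation $a$: because $a$ is a weak kernel of the deflation $b$, the vanishing $bf=0$ yields a morphism $g\colon M\to T$ with $f=ag$. Now $T\in\mathcal{C}$ and $M$ is left Schurian for $\mathcal{C}$, so $g$ is either zero or a $\Theta$-inflation. If $g=0$ then $f=ag=0$; if $g$ is a $\Theta$-inflation then $f=ag$ is a composite of the two $\Theta$-inflations $g$ and $a$, hence a $\Theta$-inflation by Proposition \ref{pro-length-cat}(1). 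In every branch $f$ is zero or a $\Theta$-inflation, which is exactly what it means for $M$ to be left Schurian for $\mathcal{C}\ast_{\Theta}\DD$.

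The one step that is not purely formal, and which I expect to be the main point to justify, is the factorization $f=ag$ in the case $bf=0$. This rests on the standard fact that for the $\E$-triangle defining $N$ the induced sequence $\Hom_{\AA}(M,T)\xrightarrow{a_{\ast}}\Hom_{\AA}(M,N)\xrightarrow{b_{\ast}}\Hom_{\AA}(M,F)$ is exact, so that $f\in\ker(b_{\ast})=\im(a_{\ast})$; everything else is a direct application of the closure properties of $\Theta$-inflations recorded in Proposition \ref{pro-length-cat}. I would also remark that the argument uses the $\E$-triangle only through its inflation $a$ and deflation $b$, so the stability built into $\ast_{\Theta}$ plays no role in the factorization itself; it is needed only to ensure that $a$ is a genuine $\Theta$-inflation, which is precisely what the final composition step in Proposition \ref{pro-length-cat}(1) requires.
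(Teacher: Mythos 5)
Your proposal is correct and follows essentially the same argument as the paper: test a morphism $M\to N$ against the deflation to $F\in\DD$, use Proposition \ref{pro-length-cat}(2) when the composite is a $\Theta$-inflation, and otherwise factor through the inflation from $T\in\mathcal{C}$ (via the exactness of the induced $\Hom$-sequence, which the paper uses tacitly) and finish with Proposition \ref{pro-length-cat}(1). Your closing remark on where stability of the triangle actually enters is a nice clarification but does not change the argument.
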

	\begin{proof}
		Let $C\stackrel{f}{\rightarrowtail} X\stackrel{g}{\twoheadrightarrow} D\stackrel{}\dashrightarrow$ be  an $\E$-triangle with $C \in \mathcal{C}$ and $D \in \DD$. For any morphism $\varphi:M \rightarrow X$, we consider the following diagram
		\begin{equation*}
			\xymatrix{
				&   M\ar[d]^{\varphi}& & \\
				C\ar@{>->}[r]^{f}  & X\ar@{->>}[r]^{g} & D  \ar@{-->}[r]^{ } & .}
		\end{equation*}
		Since $M$ is left Schurian for $\DD$, the morphism $g\varphi:M\rightarrow D$ is either zero or a $\Theta$-inflation. For the latter, the morphism $\varphi$ is  a $\Theta$-inflation  by Proposition \ref {pro-length-cat}(2). For the former, there exists a morphism $\overline{\varphi}:M \rightarrow C$ such that $\varphi=f\overline{\varphi}$. Note that $M$ is a left Schurian for $\mathcal{C}$. Thus $\overline{\varphi}$ is either zero or a $\Theta$-inflation. In the former case, we have $\varphi=0$, and in the latter, we know that $\varphi$ is a $\Theta$-inflation by Proposition  \ref {pro-length-cat}(1).
	\end{proof}
	
	\begin{lemma}\label{lem-schur2} Let $\MM$ be a monobrick in $\AA$ and  take                                                                                                                                                                                                                                                                                                                                                                                                                                                                                                                                                                                                                                                                                                                                                                                                                                                                                                                                                                                                                                                                                                                                                                                                                                                                                                                                                                                                                                                                                                                                                                                                                                                                                                                                                                                                                                                                                                                                                                                                                                                                                                                     $M\in \Filt \MM$. Then the following statements are equivalent:
		\begin{enumerate} 
			\item $M\in\simp(\Filt \MM)$.
			\item $M\in\MM$.
			\item $M$ is left Schurian for $\Filt \MM$.
		\end{enumerate} 
	\end{lemma}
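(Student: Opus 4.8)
The plan is to prove the cyclic chain of implications $(2)\Rightarrow(3)\Rightarrow(1)\Rightarrow(2)$, using the monobrick hypothesis together with the filtration structure of $\Filt\MM$ recorded in Remark \ref{filt-0} and the composition/cancellation properties of $\Theta$-inflations from Proposition \ref{pro-length-cat}.

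For $(2)\Rightarrow(3)$, suppose $M\in\MM$; I would show $M$ is left Schurian for $\Filt\MM$ by induction on the $\MM$-length $l_{\MM}(X)$ of the target $X\in\Filt\MM$. When $l_{\MM}(X)=1$ we have $X\in\MM$, so any morphism $M\to X$ is either zero or a $\Theta$-inflation directly by the definition of a monobrick (Definition \ref{monobrick}). For the inductive step, Remark \ref{filt-0} provides a stable $\E$-triangle $X'\stackrel{i}{\rightarrowtail}X\stackrel{p}{\twoheadrightarrow}B\dashrightarrow$ with $B\in\MM$ and $X'\in\Filt\MM$ of strictly smaller $\MM$-length. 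Given an arbitrary morphism $\varphi\colon M\to X$, I consider $p\varphi\colon M\to B$: since $M,B\in\MM$, the monobrick hypothesis forces $p\varphi$ to be zero or a $\Theta$-inflation. If it is a $\Theta$-inflation, then $\varphi$ is a $\Theta$-inflation by Proposition \ref{pro-length-cat}(2). If $p\varphi=0$, then $\varphi$ factors as $\varphi=i\bar\varphi$ for some $\bar\varphi\colon M\to X'$ (by the left-exactness of $\Hom_{\AA}(M,-)$ on the $\E$-triangle, exactly as in the proof of Lemma \ref{lem-schur1}); the induction hypothesis applied to $X'$ shows $\bar\varphi$ is zero or a $\Theta$-inflation, whence $\varphi$ is zero or a composite of two $\Theta$-inflations, hence a $\Theta$-inflation by Proposition \ref{pro-length-cat}(1). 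This step is really an iterated application of Lemma \ref{lem-schur1} along the filtration of $X$.

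The remaining two implications are short. For $(3)\Rightarrow(1)$, I argue as in Lemma \ref{schurian-simple}(2): given a stable $\E$-triangle $L\rightarrowtail M\stackrel{g}{\twoheadrightarrow}N\dashrightarrow$ in $\Filt\MM$, hypothesis (3) makes $g$ zero or a $\Theta$-inflation; if $g=0$ then $N\cong0$ by Remark \ref{R-2-4}, while if $g$ is a $\Theta$-inflation then, being simultaneously a $\Theta$-deflation, comparing lengths across the two stable $\E$-triangles it sits in forces $\Theta(L)=0$, so $L\cong0$ and $g$ is an isomorphism. Either way $M$ admits no $\E$-triangle in $\Filt\MM$ with both outer terms non-zero, i.e. $M\in\simp(\Filt\MM)$. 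For $(1)\Rightarrow(2)$, suppose $M\in\simp(\Filt\MM)$ but $l_{\MM}(M)\ge2$; then Remark \ref{filt-0} yields a stable $\E$-triangle $M''\rightarrowtail M\twoheadrightarrow X_2\dashrightarrow$ in $\Filt\MM$ with $X_2\in\MM$ and $M''\in\Filt\MM$ of $\MM$-length $l_{\MM}(M)-1\ge1$, so both $M''$ and $X_2$ are non-zero, contradicting simplicity. Hence $l_{\MM}(M)=1$, which means $M\in\MM$.

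I expect the inductive step of $(2)\Rightarrow(3)$ to be the main point requiring care: one must combine the monobrick condition on the top layer $B$ with the factorization $\varphi=i\bar\varphi$ through the $\E$-triangle and then invoke both parts of Proposition \ref{pro-length-cat} in the two cases. The only slightly delicate input is the left-exactness of $\Hom_{\AA}(M,-)$ that produces the factorization, which is the same mechanism already used in Lemma \ref{lem-schur1}; everything else is bookkeeping with $\Theta$-lengths and the decompositions supplied by Remark \ref{filt-0}.
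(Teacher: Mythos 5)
Your proposal is correct and follows essentially the same route as the paper: the cycle of implications is the same (the paper writes it as (1)$\Rightarrow$(2)$\Rightarrow$(3)$\Rightarrow$(1)), with (2)$\Rightarrow$(3) obtained by iterating Lemma \ref{lem-schur1} along the filtration from Remark \ref{filt-0}, (3)$\Rightarrow$(1) via Lemma \ref{schurian-simple}(2), and (1)$\Rightarrow$(2) from the stable $\E$-triangle $X\rightarrowtail M\twoheadrightarrow N$ with $X\in\MM$. Your only cosmetic deviations are unfolding the iteration into an explicit induction on $l_{\MM}$ and phrasing (1)$\Rightarrow$(2) as a contradiction on $\MM$-length rather than reading $M\cong X$ off the triangle directly; both are sound.
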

	\begin{proof}	
		(1)$\Rightarrow$(2) By Remark \ref{filt-0}, there exists an $\E$-triangle  $X\stackrel{}\rightarrowtail M\stackrel{}\twoheadrightarrow N\stackrel{}\dashrightarrow$ in   $\Filt \MM$ such that $X\in \MM$. Since $M\in\simp(\Filt \MM)$, we infer that $M\cong X\in\MM$.
		
		(2)$\Rightarrow$(3) Note that $M$ is left Schurian for $\MM$. By using Lemma \ref{lem-schur1} repeatedly, we conclude that $M$ is left Schurian for $\Filt \MM$.
		
		(3)$\Rightarrow$(1) This follows from Lemma \ref{schurian-simple}(2).
	\end{proof}

	We can now state the relationship between monobricks and left Schur subcategories in length categories. These results was proved by Enomoto in \cite[Theorem 2.11]{Eno} for abelian length categories.

	\begin{theorem}\label{thm:schur-mbrick}
		Let $((\AA,\E,\s), \Theta)$ be an extriangulated length category.
		There exists a bijection between
		\[
		\begin{tikzcd}[row sep = small]
			\lSchur\AA \rar["\simp", shift left] & \mbrick\AA \lar["\Filt", shift left] .
		\end{tikzcd}
		\]
	\end{theorem}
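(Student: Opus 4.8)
The plan is to prove that the two maps $\simp$ and $\Filt$ are mutually inverse. First I would check well-definedness. That $\simp(\EE)$ is a monobrick for every $\EE \in \lSchur\AA$ is exactly Lemma \ref{schurian-simple}(1), so $\simp$ maps $\lSchur\AA$ into $\mbrick\AA$. In the opposite direction, for a monobrick $\MM$ the subcategory $\Filt\MM$ is closed under $\Theta$-extensions by Remark \ref{filt-0}, and by the equivalence (1)$\Leftrightarrow$(3) of Lemma \ref{lem-schur2} every $M \in \simp(\Filt\MM)$ is left Schurian for $\Filt\MM$; hence $\Filt\MM$ satisfies conditions (a) and (b) of Definition \ref{left-schurian}(2), so $\Filt\MM \in \lSchur\AA$.

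Next I would verify $\simp(\Filt\MM) = \MM$ for every monobrick $\MM$. This is immediate from Lemma \ref{lem-schur2}: the equivalence (1)$\Leftrightarrow$(2) says that for $M \in \Filt\MM$ one has $M \in \simp(\Filt\MM)$ precisely when $M \in \MM$, so the two sets coincide. This settles one of the two composition identities.

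The remaining, and principal, identity is $\Filt(\simp\EE) = \EE$ for every $\EE \in \lSchur\AA$. The inclusion $\Filt(\simp\EE) \subseteq \EE$ follows because $\Filt(\simp\EE)$ is the smallest subcategory closed under $\Theta$-extensions containing $\simp\EE$ (this is the analogue of \cite[Lemma 2.8]{Wa2} applied inside the stable structure $(\AA,\E_{\Theta},\s_{\Theta})$, in which $\Filt$ is computed), while $\EE$ is itself closed under $\Theta$-extensions and contains $\simp\EE$. For the reverse inclusion $\EE \subseteq \Filt(\simp\EE)$ I would argue by induction on $\Theta(M)$ for $M \in \EE$. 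If $M$ is simple in $\EE$, then $M \in \simp\EE \subseteq \Filt(\simp\EE)$. Otherwise, by the definition of a simple object there is a $\Theta$-stable $\E$-triangle $A \rightarrowtail M \twoheadrightarrow B \dashrightarrow$ in $\EE$ with $A,B \neq 0$; stability gives $\Theta(M) = \Theta(A) + \Theta(B)$ with $\Theta(A),\Theta(B) < \Theta(M)$, so the induction hypothesis yields $A, B \in \Filt(\simp\EE)$, and closure under $\Theta$-extensions then forces $M \in \Filt(\simp\EE)$.

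The step I expect to be the main obstacle is this reverse inclusion $\EE \subseteq \Filt(\simp\EE)$. The delicate point is that the induction must decrease $\Theta$ strictly, which relies on the decomposing triangle being $\Theta$-\emph{stable}: only then does $\Theta(M) = \Theta(A)+\Theta(B)$ hold (Remark \ref{R-2-4} guarantees both summands drop below $\Theta(M)$ once $A,B\neq 0$). Since simple objects and $\Filt$ are defined via the stable substructure $(\AA,\E_{\Theta},\s_{\Theta})$ rather than general $\E$-triangles, the bookkeeping of staying within $\EE$ and within the stable structure throughout the filtration is where the argument requires care; the remaining verifications are routine consequences of the cited lemmas.
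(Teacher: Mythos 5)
Your proposal is correct and follows essentially the same route as the paper's proof: well-definedness and the identity $\simp(\Filt\MM)=\MM$ via Lemma \ref{schurian-simple} and Lemma \ref{lem-schur2}, and the identity $\Filt(\simp\EE)=\EE$ by induction on $\Theta(M)$, using the $\Theta$-stability of the decomposing triangle to force a strict drop in length. The only cosmetic difference is that the paper anchors the induction at the minimal value $t=\min\{\Theta(M)\mid M\in\EE\}$, whereas you terminate at simple objects; the argument is otherwise identical.
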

	\begin{proof} Let $\EE$ be a left Schur subcategory of $\AA$. Then Lemma \ref{schurian-simple}(1) implies that $\simp(\EE)$ is a monobrick. Note that $\EE$ is closed under $\Theta$-extensions. Thus $\Filt (\simp(\EE )) \subseteq\EE$. On the other hand, we take  $M\in\EE$ and set $t=\min\{\Theta(M)~|~M \in \EE \}$.  We claim that $M\in\Filt (\simp(\EE ))$. If $\Theta(M) = t$, then we clearly have $M \in \simp(\EE)$. For $\Theta(M)>t$, we may assume that  $M \notin \simp(\EE)$. Then there exists an $\E$-triangle $X\stackrel{}{\rightarrowtail} M\stackrel{}{\twoheadrightarrow} Y\stackrel{}\dashrightarrow$ in $\EE$ such that $\max\{\Theta(X), \Theta(Y)\} < \Theta (M)$.  By induction hypothesis, we have $X, Y \in \Filt (\simp(\EE )) $ and so is $M$. The observation above implies that $\Filt (\simp(\EE ))=\EE$. Conversely, we take a monobrick $\MM$ in $\AA$. Recall that $\Filt \MM$ is closed under $\Theta$-extensions. By using Lemma \ref{lem-schur2}, we infer that $\Filt \MM$ is a left Schur subcategory. Again by Lemma \ref{lem-schur2}, we have $\EE=\simp(\Filt\MM)$. This proves $\simp$ and $\Filt$ are mutually inverse isomorphisms.
	\end{proof}
	
\begin{remark}
Using the analogous arguments as those proving Theorem \ref{thm:schur-mbrick}, we can prove there exists a bijection between 	$\rSchur\AA$ and $\ebrick \AA$.
\end{remark}

	\section{Torsion-free classes and cofinally closed monobricks}\label{sec:4}
   The aim of this section is to investigate torsion-free classes in length categories by using Theorem \ref{thm:schur-mbrick}. It turns out that torsion-free classes are precisely left Schurian subcategories generated by cofinally closed monobricks. 

  We begin with recalling the following  definitions from \cite[Section 4]{WLZZ}. For a subcategory $\SS\subseteq\AA$, we define
 	 $$\Fac_{\Theta}(\SS)=\{M\in \AA~|~\text{there exists a $\Theta$-deflation}~S\twoheadrightarrow M~\text{for some}~S\in \SS\},$$
	 $$\Sub(\SS)=\{M\in \AA~|~\text{there exists a $\Theta$-inflation}~M\rightarrowtail S~\text{for some}~S\in \SS\}.$$
	
	\begin{definition}\label{Torsion}\cite[Definition 4.1]{WLZZ}
		A {\em torsion class}  in $(\AA,\Theta)$ is a subcategory $\TT$ of $\AA$ such that $\Fac_{\Theta}(\TT)\subseteq\TT$ and $\TT\ast_{\Theta}\TT\subseteq\TT$. The set of torsion classes in $(\AA,\Theta)$ is denoted by $\tors \AA$. Dually, a subcategory $\FF$ of $\AA$ is called a {\em torsion-free class} in $(\AA,\Theta)$ if $\Sub(\FF)\subseteq\FF$ and $\FF\ast_{\Theta}\FF\subseteq\FF$. The set of torsion-free classes in $(\AA,\Theta)$ is denoted by $\torf \AA$.
	\end{definition}
	
    \begin{remark} We note that  torsion-classes  (resp. torsion-free classes) are extension-closed (cf. \cite[Lemma 4.3]{WLZZ}). On the other hand, there exist mutually inverse bijections
	\[
         \begin{tikzcd}[row sep = small, scale = 1.5 ]
  	         \tors \AA \rar["(-)^{\perp}", shift left] & \torf \AA \lar["{^{\perp}}(-)", shift left] .
         \end{tikzcd}
     \]
      We refer the reader to \cite[Section 4]{WLZZ} for more details.
     \end{remark}

	The observations following show that torsion-free classes are left Schur subcategories.
	
	\begin{lemma}\label{prop:torf-schur}
		We have $\torf \AA\subseteq\lSchur\AA$.
	\end{lemma}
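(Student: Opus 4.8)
The plan is to verify directly that every torsion-free class $\FF$ satisfies the two defining conditions of a left Schur subcategory in Definition \ref{left-schurian}(2). Condition (a), that $\FF$ is closed under $\Theta$-extensions, is immediate since it is already part of the definition of a torsion-free class (Definition \ref{Torsion}). Thus the entire content of the lemma lies in condition (b): for every simple object $M\in\simp(\FF)$, I must show that $M$ is left Schurian for $\FF$, i.e. that any morphism $f\colon M\to C$ with $C\in\FF$ is either zero or a $\Theta$-inflation.

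To this end I would fix such an $f$ and exploit the fact that $(\AA,\Theta)$ is a length category, so that $f$ is $\Theta$-admissible and admits a $\Theta$-decomposition $f=j_f i_f$, where $i_f\colon M\twoheadrightarrow X_f$ is a $\Theta$-deflation and $j_f\colon X_f\rightarrowtail C$ is a $\Theta$-inflation. The crucial observation is that both the image object $X_f$ and the cocone $K:=\cocone(i_f)$ re-enter $\FF$: indeed $j_f\colon X_f\rightarrowtail C$ is a $\Theta$-inflation into $C\in\FF$, so $X_f\in\Sub(\FF)\subseteq\FF$, while the $\Theta$-stable $\E$-triangle $K\rightarrowtail M\twoheadrightarrow X_f\dashrightarrow$ exhibits $K$ as a $\Theta$-subobject of $M\in\FF$, whence $K\in\Sub(\FF)\subseteq\FF$. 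Consequently this $\E$-triangle lives entirely inside $\FF$, which is exactly what lets the simplicity hypothesis be brought to bear.

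Now simplicity of $M$ in $\FF$ forces either $K\cong 0$ or $X_f\cong 0$. If $X_f\cong 0$, then $f=j_f i_f=0$. If $K\cong 0$, then Remark \ref{R-2-4} shows that the $\Theta$-deflation $i_f$ is an isomorphism, so $f=j_f i_f$ is the composition of an isomorphism with the $\Theta$-inflation $j_f$, hence a $\Theta$-inflation by Proposition \ref{pro-length-cat}(1). Either way $f$ is zero or a $\Theta$-inflation, establishing (b) and therefore $\FF\in\lSchur\AA$. I do not expect a genuine obstacle here; the single point demanding care is the confirmation that both $X_f$ and $K$ return to $\FF$ via closure under $\Sub$, since it is precisely this stability property under $\Theta$-subobjects---rather than closure under extensions---that converts the admissible factorization of $f$ into an $\E$-triangle internal to $\FF$ on which the definition of a simple object can be applied.
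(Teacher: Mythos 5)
Your proposal is correct and follows essentially the same route as the paper: take the $\Theta$-decomposition $(i_f,X_f,j_f)$ of $f$, observe that $X_f$ and $\cocone(i_f)$ lie in $\FF$ by closure under $\Sub$, and invoke simplicity of $M$ to conclude that $f$ is zero or a $\Theta$-inflation. Your write-up is merely a bit more explicit than the paper's in separating the case $X_f\cong 0$ (giving $f=0$) from the case $\cocone(i_f)\cong 0$ (giving $i_f$ an isomorphism via Remark \ref{R-2-4}).
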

	\begin{proof}  Let $\mathcal{F}$ be a torsion-free class in $(\AA,\Theta)$. Take a non-zero morphism $f:M\rightarrow  X$ in $\mathcal{F}$ with $M\in\simp(\FF)$. It is suffices to show that $f$ is a $\Theta$-inflation. To see this, we take a $\Theta$-decomposition $(i_f,X_f,j_f)$ of $f$. Since $\FF$ is a torsion-free class, we have $\cocone(i_f),X_f\in\FF$. Since $M\in\simp(\FF)$, we infer that $i_f$ is an isomorphism and hence $f\cong j_f$ is a $\Theta$-inflation.
	\end{proof}

	For a subcategory $\SS\subseteq \AA$, we define
	$$\mathrm{T}_{\Theta}(\SS):=\bigcap_{\TT\in \tors \AA;\SS\subseteq\mathcal{ T}}\TT~\text{and}~\mathrm{F}_{\Theta}(\SS):=\bigcap_{\mathcal{F}\in\torf \AA;\SS\subseteq\mathcal{F}}\mathcal{F}.$$
	Clearly, $\mathrm{T}_{\Theta}(\SS)$ and $\mathrm{F}_{\Theta}(\SS)$ are the smallest torsion class and the smallest torsion-free class containing $\SS$, respectively. The following result is quiet useful.
	
	\begin{proposition}\label{P-3-7} {\rm (\cite[Proposition 4.8]{WLZZ})} Let $\SS$ be a subcategory of $\AA$. Then 
		$$\mathrm{T}_{\Theta}(\SS)=\Filt(\Fac_{\Theta}(\SS))~{\text and}~\mathrm{F}_{\Theta}(\SS)=\Filt(\Sub(\SS)).$$
	\end{proposition}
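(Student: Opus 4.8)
The plan is to prove the second identity $\mathrm{F}_{\Theta}(\SS)=\Filt(\Sub(\SS))$ in full, since the first identity $\mathrm{T}_{\Theta}(\SS)=\Filt(\Fac_{\Theta}(\SS))$ follows by the formally dual argument (interchanging $\Theta$-inflations with $\Theta$-deflations, $\Sub$ with $\Fac_{\Theta}$, and torsion-free classes with torsion classes, i.e. passing to $\AA^{\op}$). For the equality I would establish the two inclusions separately, the delicate one being that $\Filt(\Sub(\SS))$ is itself a torsion-free class.

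For $\Filt(\Sub(\SS))\subseteq \mathrm{F}_{\Theta}(\SS)$: by definition $\mathrm{F}_{\Theta}(\SS)$ is a torsion-free class containing $\SS$, hence $\Sub(\mathrm{F}_{\Theta}(\SS))\subseteq \mathrm{F}_{\Theta}(\SS)$, so in particular $\Sub(\SS)\subseteq \mathrm{F}_{\Theta}(\SS)$. Since $\mathrm{F}_{\Theta}(\SS)$ is closed under $\Theta$-extensions and $\Filt(\Sub(\SS))$ is the smallest subcategory closed under $\Theta$-extensions containing $\Sub(\SS)$ (this is Remark \ref{filt-0} together with \cite[Lemma 2.8]{Wa2} applied in the stable category $(\AA,\E_{\Theta},\s_{\Theta})$), the inclusion follows.

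For the reverse inclusion it suffices to check that $\Filt(\Sub(\SS))$ is a torsion-free class containing $\SS$, for then minimality of $\mathrm{F}_{\Theta}(\SS)$ forces $\mathrm{F}_{\Theta}(\SS)\subseteq \Filt(\Sub(\SS))$. Containment of $\SS$ is clear (each $S\in\SS$ satisfies $S\rightarrowtail S$, so $S\in\Sub(\SS)$) and closure under $\Theta$-extensions is automatic, so the only substantive point is closure under $\Sub$, namely $\Sub(\Filt(\Sub(\SS)))\subseteq \Filt(\Sub(\SS))$. I would prove this by induction on the $\Sub(\SS)$-length $l=l_{\Sub(\SS)}(N)$, where $M\rightarrowtail N$ is a $\Theta$-inflation and $N\in\Filt(\Sub(\SS))$. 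If $l=0$ then $N=0$, whence $M=0$ by Remark \ref{R-2-4}. For $l\geq 1$, Remark \ref{filt-0} provides a stable $\E$-triangle $N'\rightarrowtail N\xrightarrow{g} X\dashrightarrow$ with $X\in\Sub(\SS)$ and $l_{\Sub(\SS)}(N')=l-1$. Writing $i:M\rightarrowtail N$, take a $\Theta$-decomposition $M\xrightarrow{p}Z\xrightarrow{j}X$ of $gi$; composing inflations gives $Z\in\Sub(\SS)$, and $M'=\cocone(p)$ sits in a stable $\E$-triangle $M'\xrightarrow{a}M\xrightarrow{p}Z\dashrightarrow$. Since $g(ia)=jpa=0$, the $\Theta$-inflation $ia$ (a composite of $\Theta$-inflations, Proposition \ref{pro-length-cat}(1)) factors as $ia=n'k'$ through the inflation $n':N'\rightarrowtail N$, and Proposition \ref{pro-length-cat}(2) then shows $k':M'\rightarrowtail N'$ is a $\Theta$-inflation. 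The induction hypothesis gives $M'\in\Filt(\Sub(\SS))$, and the stable $\E$-triangle $M'\rightarrowtail M\twoheadrightarrow Z\dashrightarrow$ with $M',Z\in\Filt(\Sub(\SS))$ yields $M\in\Filt(\Sub(\SS))$ by closure under $\Theta$-extensions.

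The main obstacle is the inductive step, and specifically the construction of the $\Theta$-inflation $M'\rightarrowtail N'$: one must combine the $\Theta$-decomposition of the composite $M\to N\to X$ with the factorization coming from $g(ia)=0$ (which relies on exactness of the covariant $\Hom$-sequence attached to a conflation in an extriangulated category) and then invoke the cancellation property in Proposition \ref{pro-length-cat}(2). Once this is in place, everything else is routine bookkeeping with the two ways of peeling off a $\Filt$-filtration recorded in Remark \ref{filt-0}, together with the dualization for the torsion-class identity.
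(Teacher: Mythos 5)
The paper gives no proof of this proposition at all: it is imported verbatim as \cite[Proposition 4.8]{WLZZ}, so there is nothing internal to compare your argument against. Judged on its own terms, your reconstruction is correct and self-contained. The easy inclusion $\Filt(\Sub(\SS))\subseteq \mathrm{F}_{\Theta}(\SS)$ is handled exactly as one would expect (minimality of $\Filt$ among subcategories closed under $\Theta$-extensions containing $\Sub(\SS)$), and the substantive half --- that $\Filt(\Sub(\SS))$ is closed under $\Sub$ --- is argued soundly: given a $\Theta$-inflation $M\rightarrowtail N$ with $N\in\Filt(\Sub(\SS))$, you peel off a top quotient $N\twoheadrightarrow X$ with $X\in\Sub(\SS)$, take the $\Theta$-decomposition $M\twoheadrightarrow Z\rightarrowtail X$ of the composite (which exists because every morphism in a length category is $\Theta$-admissible, and puts $Z$ in $\Sub(\SS)$ by composing $\Theta$-inflations), factor $M'=\cocone(M\twoheadrightarrow Z)\to N$ through $N'$ using the exact $\Hom$-sequence, and invoke Proposition \ref{pro-length-cat}(2) to see that $M'\rightarrowtail N'$ is again a $\Theta$-inflation before closing the induction with a $\Theta$-extension. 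Two small points you implicitly lean on and should make explicit if writing this up: (i) the identity is a $\Theta$-inflation (split $\E$-triangles are stable by Definition \ref{D-1}(2)), which is what puts $\SS$ inside $\Sub(\SS)$; and (ii) the peeling step needs $l_{\Sub(\SS)}(N')\leq l-1$ rather than equality --- the inequality is what \cite[Proposition 2.2(3)]{WLZZ} as quoted in Remark \ref{filt-0} actually delivers, and it is all the induction requires. The reduction of the torsion-class identity to the torsion-free one by passing to $\AA^{\op}$ is legitimate since the opposite of an extriangulated length category is again one, with inflations and deflations interchanged.
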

	

	\begin{definition}\label{def:cof}
		Let $\MM$ and $\NN$ be two monobricks in $\AA$. We say $\NN$ is a {\em cofinal extension of $\MM$}, or $\MM$ is \emph{cofinal in $\NN$}, if the following conditions are satisfied:
		\begin{enumerate}
			\item $\MM \subset \NN$.
			\item For every $N \in \NN$, there exists a $\Theta$-inflation $N\rightarrowtail M$ with $M\in\MM$.
			\item If $ 0\neq N\in\NN$, then $N$ is left Schurian for $\Sub(\MM)$.
		\end{enumerate}
		A monobrick $\MM$ is called {\em cofinally closed} if there is no proper cofinal extension of $\MM$. The set of cofinally closed monobricks in $\AA$ is denoted by $\ccmbrick\AA$.
	\end{definition}
	
	\begin{remark}\label{rem:cofinal-extension} 	Let $\MM$ and $\NN$ be two monobricks in $\AA$. The first two conditions in Definition \ref{def:cof} implies that $\MM \subset \NN\subseteq\Sub(\MM)$. Now we assume that $\AA$ is an  abelian length category and set $\Theta=l_{{\rm sim}(\AA)}$. It can be check that $N$ is left Schurian for $\Sub(\MM)$ for any $N\in\NN$. Thus our definition coincides with \cite[Definition 3.2]{Eno}. 
	\end{remark}

	\begin{lemma}\label{prop:cofunion}
		Let $\MM$ be a monobrick in $\AA$. If $\{ \NN_i \, | \, i \in I\}$ is a family of cofinal extensions of $\MM$, then $\NN:= \bigcup_{i \in I} \NN_i$ is a cofinal extension of $\MM$.
	\end{lemma}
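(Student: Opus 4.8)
The plan is to verify that $\NN$ satisfies the three defining conditions of a cofinal extension in Definition \ref{def:cof}, after first checking that $\NN$ is indeed a monobrick. Throughout I will use the observation recorded in Remark \ref{rem:cofinal-extension} that each cofinal extension $\NN_i$ satisfies $\NN_i \subseteq \Sub(\MM)$, so that $\NN \subseteq \Sub(\MM)$ as well. The crucial point, and the only step that requires genuine work, is showing that $\NN$ is a monobrick; the three conditions will then follow almost immediately, since membership in the union reduces to membership in one of the $\NN_i$.

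For the monobrick property, I would take two elements $X, Y \in \NN$, say $X \in \NN_i$ and $Y \in \NN_j$ for some (possibly distinct) $i, j \in I$, together with an arbitrary morphism $f : X \to Y$. Since $X$ is a brick it is nonzero, so condition (3) of Definition \ref{def:cof} applied to the cofinal extension $\NN_i$ guarantees that $X$ is left Schurian for $\Sub(\MM)$. As $Y \in \NN_j \subseteq \Sub(\MM)$, the defining property of a left Schurian object forces $f$ to be either zero or a $\Theta$-inflation. This is precisely the monobrick condition for the pair $X, Y$, so $\NN$ is a monobrick. I expect this to be the main obstacle exactly because $X$ and $Y$ may come from different members of the family: one cannot simply invoke the fact that a single $\NN_i$ is a monobrick, and it is the left Schurian condition (3) that bridges the different members.

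It then remains to check conditions (1)--(3) of Definition \ref{def:cof} for $\NN$. For (1), since $I$ is nonempty and $\MM \subseteq \NN_i$ for every $i$, we get $\MM \subseteq \NN$. For (2), any $N \in \NN$ lies in some $\NN_i$, and the cofinality of $\NN_i$ over $\MM$ yields a $\Theta$-inflation $N \rightarrowtail M$ with $M \in \MM$. For (3), any nonzero $N \in \NN$ again lies in some $\NN_i$, and condition (3) for $\NN_i$ shows that $N$ is left Schurian for $\Sub(\MM)$. Hence $\NN$ satisfies all the requirements and is a cofinal extension of $\MM$, which completes the argument.
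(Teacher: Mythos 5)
Your proof is correct and follows essentially the same route as the paper: the only substantive point is that $\NN$ is a monobrick, which both you and the authors establish by noting that any $X\in\NN_i$ is left Schurian for $\Sub(\MM)$ while any $Y\in\NN_j$ lies in $\Sub(\MM)$, so a non-zero morphism $X\to Y$ must be a $\Theta$-inflation. Your write-up is in fact slightly more explicit than the paper's, which leaves the appeal to condition (3) of Definition \ref{def:cof} and the verification of conditions (1)--(3) for $\NN$ implicit.
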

	\begin{proof} It is suffices to prove $\NN$ is a  monobrick. Take $N_1,N_2 \in \NN$ with $N_1 \in \NN_{i_1}$ and $N_2 \in \NN_{i_2}$, and let $f \colon N_1 \to N_2$ be a non-zero morphism. By Remark \ref{rem:cofinal-extension},  we have $N_2\subseteq\Sub(\MM)$ and thus $f$ is a $\Theta$-inflation. This shows that $\NN$ is a  monobrick. 
	\end{proof}
	
For a monobrick $\MM$ in $\AA$, we denote by $\ov{\MM}$ the union of all cofinal extensions of $\MM$.

	\begin{lemma}\label{cor:closure}
		Let $\MM$ be a monobrick in $\AA$. Then $\ov{\MM}$ is a cofinal extension of $\MM$ and  $\ov{\MM}$ is cofinally closed.
	\end{lemma}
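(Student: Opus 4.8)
The plan is to prove the two assertions separately: the first is essentially immediate from Lemma~\ref{prop:cofunion}, while the second rests on the fact that cofinal extensions compose, i.e.\ on a transitivity property.

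First I would observe that $\ov{\MM}$, being by definition the union of the family of all cofinal extensions of $\MM$, falls directly under the hypothesis of Lemma~\ref{prop:cofunion}, which then yields at once that $\ov{\MM}$ is again a cofinal extension of $\MM$. The only preliminary point is that this family is nonempty, and this holds because $\MM$ is a cofinal extension of itself: conditions (1) and (2) of Definition~\ref{def:cof} are immediate (take $M=N$ in (2)), while condition (3) amounts to the assertion that each brick of $\MM$ is left Schurian for $\Sub(\MM)$, the delicate point addressed below.

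For the second assertion I would take an arbitrary cofinal extension $\NN$ of $\ov{\MM}$ and show that it is already a cofinal extension of $\MM$. Granting this, the definition of $\ov{\MM}$ forces $\NN\subseteq\ov{\MM}$, while condition (1) for $\NN$ over $\ov{\MM}$ gives $\ov{\MM}\subseteq\NN$; hence $\NN=\ov{\MM}$, so $\ov{\MM}$ admits no proper cofinal extension and is cofinally closed. To establish the transitivity, note that $\MM\subseteq\ov{\MM}\subseteq\NN$ settles condition (1). For condition (2), each $N\in\NN$ admits a $\Theta$-inflation $N\rightarrowtail\bar{M}$ with $\bar{M}\in\ov{\MM}$ and then a $\Theta$-inflation $\bar{M}\rightarrowtail M$ with $M\in\MM$, and Proposition~\ref{pro-length-cat}(1) composes these into a $\Theta$-inflation $N\rightarrowtail M$. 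For condition (3), the inclusion $\MM\subseteq\ov{\MM}$ yields $\Sub(\MM)\subseteq\Sub(\ov{\MM})$, and, since being left Schurian for a subcategory is inherited by smaller subcategories, every nonzero $N\in\NN$ — which is left Schurian for $\Sub(\ov{\MM})$ by hypothesis — is also left Schurian for $\Sub(\MM)$. As $\NN$ is a monobrick by assumption, these three conditions show that $\NN$ is a cofinal extension of $\MM$.

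The transitivity step uses only the monotonicity $\Sub(\MM)\subseteq\Sub(\ov{\MM})$ and the closure of $\Theta$-inflations under composition, so I expect it to present no real difficulty. The main obstacle I anticipate is condition (3) itself: unlike the abelian situation, where it is automatic by Remark~\ref{rem:cofinal-extension}, in an extriangulated length category it is a genuine requirement, and the delicate base case is the verification that the bricks of $\MM$ are left Schurian for $\Sub(\MM)$. Making that case precise — controlling a nonzero morphism out of a brick of $\MM$ that becomes zero after composition with a $\Theta$-inflation into $\MM$ — is where the essential, non-formal work lies, since $\Theta$-inflations need not be monomorphisms in general and so the vanishing cannot be cancelled directly.
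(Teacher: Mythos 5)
Your proof follows the same route as the paper's: the first assertion is Lemma~\ref{prop:cofunion} applied to the family of all cofinal extensions of $\MM$, and the second is the transitivity of cofinal extensions (a cofinal extension of $\ov{\MM}$ is one of $\MM$, hence is contained in $\ov{\MM}$ by definition of the union, while containing it by condition (1)). Your verification of transitivity is actually more detailed than the paper's, which simply asserts it; your three checks (inclusion, composition of $\Theta$-inflations via Proposition~\ref{pro-length-cat}(1), and the monotonicity $\Sub(\MM)\subseteq\Sub(\ov{\MM})$ combined with the fact that being left Schurian passes to smaller subcategories) are all correct. The one point you leave open --- that the family is nonempty because $\MM$ is a cofinal extension of itself, i.e.\ that every $M\in\MM$ is left Schurian for $\Sub(\MM)$ --- is genuinely needed for the statement to make sense, and the paper elides it here too (it is asserted without proof as ``Observe that $M$ is left Schurian for $\Sub(\MM)$'' inside the proof of Theorem~\ref{thm:torfproj}). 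To close it: given $0\neq f\colon M\to N$ with $N\in\Sub(\MM)$, choose a $\Theta$-inflation $j\colon N\rightarrowtail M'$ with $M'\in\MM$; since $\MM$ is a monobrick, $jf$ is either a $\Theta$-inflation, in which case $f$ is one by Proposition~\ref{pro-length-cat}(2), or $jf=0$. In the latter case write the $\Theta$-decomposition $f=j_f i_f$ through $X_f$; then $(i_f,X_f,j\circ j_f)$ is a $\Theta$-decomposition of the zero morphism $M\to M'$, and the uniqueness up to isomorphism of $\Theta$-decompositions (from \cite{WLZZ}) forces $X_f\cong 0$, hence $f=0$, a contradiction. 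This is precisely the non-formal input you anticipated; with it supplied, your argument is complete and coincides with the paper's.
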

	\begin{proof} By Lemma \ref{prop:cofunion}, we know that $\ov{\MM}$ is a cofinal extension of $\MM$. Now suppose $\MM'$ is a cofinal extension  of $\ov{\MM}$. Then $\MM'$ is also a cofinal extension  of $\MM$. Hence $\MM'=\ov{\MM}$.
	\end{proof}
	
	\begin{proposition}\label{prop:cclosed-uni}  Let $\MM$ be a monobrick in $\AA$.  If $\NN$ is a cofinal extension of $\MM$ which is cofinally closed, then $\NN = \ov{\MM}$ holds.
	\end{proposition}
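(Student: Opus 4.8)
The plan is to show that $\ov{\MM}$ is itself a cofinal extension of $\NN$. Once this is established, the cofinal-closedness of $\NN$ immediately forces $\ov{\MM}=\NN$, since a cofinally closed monobrick admits no proper cofinal extension. To verify that $\ov{\MM}$ is a cofinal extension of $\NN$, I will check the three conditions of Definition \ref{def:cof} with $\NN$ and $\ov{\MM}$ playing the roles of $\MM$ and $\NN$, using throughout that $\ov{\MM}$ is a cofinal extension of $\MM$ (Lemma \ref{cor:closure}) and that $\MM\subseteq\NN\subseteq\Sub(\MM)$ (Remark \ref{rem:cofinal-extension}).

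For condition (1), since $\NN$ is a cofinal extension of $\MM$, it is one of the monobricks whose union defines $\ov{\MM}$, so $\NN\subseteq\ov{\MM}$. For condition (2), I take $M'\in\ov{\MM}$; as $\ov{\MM}$ is a cofinal extension of $\MM$, there is a $\Theta$-inflation $M'\rightarrowtail M$ with $M\in\MM$, and $M\in\MM\subseteq\NN$ then provides the required $\Theta$-inflation from $M'$ into $\NN$.

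The main point is condition (3): every nonzero $M'\in\ov{\MM}$ must be left Schurian for $\Sub(\NN)$. From $\ov{\MM}$ being a cofinal extension of $\MM$ we already know $M'$ is left Schurian for $\Sub(\MM)$, so it suffices to prove the inclusion $\Sub(\NN)\subseteq\Sub(\MM)$ (being left Schurian for a larger subcategory is the stronger property). Here I would use $\NN\subseteq\Sub(\MM)$ together with the idempotency $\Sub(\Sub(\MM))=\Sub(\MM)$, which holds because $\Theta$-inflations compose by Proposition \ref{pro-length-cat}(1): any $X\rightarrowtail S\rightarrowtail M$ with $S\in\Sub(\MM)$ and $M\in\MM$ composes to a $\Theta$-inflation $X\rightarrowtail M$, so $X\in\Sub(\MM)$. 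Consequently $\Sub(\NN)\subseteq\Sub(\Sub(\MM))=\Sub(\MM)$, and condition (3) follows.

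With all three conditions verified, $\ov{\MM}$ is a cofinal extension of $\NN$, and the cofinal-closedness of $\NN$ yields $\NN=\ov{\MM}$. I expect the only delicate point to be condition (3), specifically the reduction through $\Sub(\NN)\subseteq\Sub(\MM)$; the remaining conditions are essentially bookkeeping with the definitions and the transitivity of $\Theta$-inflations.
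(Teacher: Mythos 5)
Your proof is correct and follows essentially the same route as the paper: both show that $\ov{\MM}$ is a cofinal extension of $\NN$ (via $\MM\subseteq\NN\subseteq\ov{\MM}\subseteq\Sub(\MM)$ and the inclusion $\Sub(\NN)\subseteq\Sub(\MM)$ obtained from composing $\Theta$-inflations) and then invoke the cofinal-closedness of $\NN$. Your write-up merely makes explicit the verification of the three conditions of Definition \ref{def:cof} that the paper leaves implicit.
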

	\begin{proof}  It is obvious that  $\MM \subset \NN \subset \ov{\MM}\subseteq \Sub(\MM)$.  By Proposition \ref{pro-length-cat}(1), we have  $\ov{\MM}\subseteq  \Sub(\MM)\subseteq\Sub(\NN)$. On the other hand, we have $\NN\subseteq\Sub(\MM)$ and thus $\Sub(\NN)\subseteq\Sub(\MM)$. The observation above implies that $\ov{\MM}$ is a cofinal extension of $\NN$. Since $\NN$ is cofinally closed, we conclude that $\NN = \ov{\MM}$.
	\end{proof}
	By Lemma \ref{cor:closure}, we can define a map
	$$(\ov{?}) \colon \mbrick \AA \twoheadrightarrow \ccmbrick\AA:~\MM\mapsto\ov{\MM}.$$
	Proposition \ref{prop:cclosed-uni} implies that  $\ov{\MM}$ is the unique cofinal extension of a given monobrick $\MM$, which is cofinally closed. It is obvious that a monobrick $\MM$ is cofinally closed if and only if $ \MM=\ov{\MM}$. 
	
	Now, we can state the main result of this section. This generalizes \cite[Theorem 3.15]{Eno} for abelian length categories.

	\begin{theorem}\label{thm:torfproj}
	Let $((\AA,\E,\s), \Theta)$ be an extriangulated length category. Then we have the following commutative diagram and the horizontal maps are bijections.
		\[
		\begin{tikzcd}
			\torf\AA \rar["\simp", shift left] \ar[dd, bend right=75, "1"'] \dar[hookrightarrow] & \ccmbrick\AA \lar["\Filt", shift left]\dar[hookrightarrow] \ar[dd, bend left=75, "1"]\\
			\lSchur\AA \rar["\simp", shift left] \dar["\FFF", twoheadrightarrow] & \mbrick\AA \dar["(\ov{?}) ", twoheadrightarrow] \lar["\Filt", shift left]  \\
			\torf\AA \rar["\simp", shift left] & \ccmbrick\AA \lar["\Filt", shift left]
		\end{tikzcd}
		\]
	\end{theorem}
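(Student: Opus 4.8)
The plan is to reduce the whole theorem to a single key identity. \emph{For every monobrick $\MM$ in $\AA$ one has}
\[
\simp(\FFF(\MM)) = \ov{\MM}
\qquad\text{and}\qquad
\Filt(\ov{\MM}) = \FFF(\MM).
\]
I would prove this first, since all three rows and every commutativity in the diagram read off from it together with Theorem~\ref{thm:schur-mbrick}.

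To prove the first equality I set $\FF := \FFF(\MM)$, which by Proposition~\ref{P-3-7} equals $\Filt(\Sub(\MM))$ and is a torsion-free class, hence a left Schur subcategory by Lemma~\ref{prop:torf-schur}; in particular $\simp(\FF)$ is a monobrick. For the inclusion $\ov{\MM}\subseteq\simp(\FF)$, every $0\neq N\in\ov{\MM}$ is left Schurian for $\Sub(\MM)$ by Definition~\ref{def:cof}(3), so iterating Lemma~\ref{lem-schur1} exactly as in the proof of Lemma~\ref{lem-schur2} shows $N$ is left Schurian for $\Filt(\Sub(\MM))=\FF$; since also $N\in\ov{\MM}\subseteq\Sub(\MM)\subseteq\FF$ by Remark~\ref{rem:cofinal-extension}, Lemma~\ref{schurian-simple}(2) gives $N\in\simp(\FF)$. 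For the reverse inclusion I would verify that $\simp(\FF)$ is a cofinal extension of $\MM$ and then use that $\ov{\MM}$ contains every cofinal extension. Conditions~(1) and~(3) of Definition~\ref{def:cof} are easy: $\MM\subseteq\ov{\MM}\subseteq\simp(\FF)$ gives~(1), while each $N\in\simp(\FF)$ is left Schurian for $\FF$ by Definition~\ref{left-schurian}, hence for the smaller family $\Sub(\MM)\subseteq\FF$, giving~(3).

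The main obstacle is condition~(2), namely $\simp(\FF)\subseteq\Sub(\MM)$: one must show that every simple object of $\Filt(\Sub(\MM))$ already embeds into $\MM$ via a $\Theta$-inflation. Here I would take $S\in\simp(\FF)$ and apply Remark~\ref{filt-0} to a minimal $\Sub(\MM)$-filtration of $S$, producing a stable $\E$-triangle $X_1\rightarrowtail S\twoheadrightarrow S'\dashrightarrow$ with $X_1\in\Sub(\MM)$ and $S'\in\FF$; minimality together with Remark~\ref{R-2-4} forces $X_1\neq 0$, and then simplicity of $S$ in $\FF$ forces $S'=0$, so $S\cong X_1\in\Sub(\MM)$ by Remark~\ref{R-2-4} again. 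This yields $\simp(\FF)=\ov{\MM}$, and the second identity $\Filt(\ov{\MM})=\FF=\FFF(\MM)$ then follows from $\Filt(\simp\FF)=\FF$ (Theorem~\ref{thm:schur-mbrick}).

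Finally I would assemble the diagram. The middle row is Theorem~\ref{thm:schur-mbrick}. The easily checked equality $\FFF(\EE)=\FFF(\simp\EE)$ for $\EE\in\lSchur\AA$ (both sides are the smallest torsion-free class containing $\simp\EE$, using $\EE=\Filt(\simp\EE)$) combines with the key identity to show that $\simp$ sends $\torf\AA$ into $\ccmbrick\AA$ and $\Filt$ sends $\ccmbrick\AA$ into $\torf\AA$: if $\MM=\ov{\MM}$ then $\Filt\MM=\Filt(\ov{\MM})=\FFF(\MM)\in\torf\AA$, and if $\FF\in\torf\AA$ then, writing $\MM=\simp\FF$, the equality $\FFF(\MM)=\FF$ gives $\ov{\MM}=\simp(\FFF(\MM))=\simp(\FF)=\MM$, so $\simp\FF$ is cofinally closed. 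Hence the top and bottom rows are the restrictions of the middle bijection and are themselves bijective. The lower-left square is precisely $\simp\circ\FFF=(\ov{?})\circ\simp$, i.e.\ the key identity; its partner $\Filt\circ(\ov{?})=\FFF\circ\Filt$ reduces to $\Filt(\ov{\MM})=\FFF(\MM)$; the two upper squares commute trivially because their vertical maps are inclusions; and the curved arrows labelled $1$ record the immediate facts $\FFF(\FF)=\FF$ for $\FF\in\torf\AA$ and $\ov{\MM}=\MM$ for $\MM\in\ccmbrick\AA$.
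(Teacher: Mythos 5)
Your proposal is correct and follows essentially the same route as the paper: the heart of both arguments is showing that $\simp(\FFF(\MM))$ is a cofinal extension of $\MM$ (via iterating Lemma~\ref{lem-schur1} over $\Filt(\Sub(\MM))$, extracting a stable $\E$-triangle from Remark~\ref{filt-0} to get $\simp\FFF(\MM)\subseteq\Sub(\MM)$, and checking the left Schurian condition), combined with Theorem~\ref{thm:schur-mbrick} and Proposition~\ref{prop:cclosed-uni}. Your packaging of this as the single identity $\simp(\FFF(\MM))=\ov{\MM}$ for arbitrary monobricks, and your use of the left Schur property of the torsion-free class $\FFF(\MM)$ to verify Definition~\ref{def:cof}(3) instead of redoing the $\Theta$-decomposition argument, are only cosmetic reorganizations of the paper's proof.
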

	\begin{proof}  By Theorem \ref{thm:schur-mbrick}, Lemma \ref{prop:torf-schur}, Lemma \ref{cor:closure} and Proposition \ref{prop:cclosed-uni}, it suffices to prove $\simp:\torf\AA\rightarrow\ccmbrick\AA $ and $\Filt:\ccmbrick\AA\rightarrow\torf\AA$ are mutually inverse to each other.

		Let $\FF$ be a torsion-free class in 	$(\AA, \Theta)$.  Using Theorem \ref{thm:schur-mbrick} together with  Lemma \ref{prop:torf-schur}, we know that $\simp(\FF)$  is a monobrick such that $\Filt\simp(\FF)=\FF$. Then  $\ov{\simp(\FF)}$ is a cofinal extension of $\simp(\FF)$ by Lemma \ref{cor:closure}. For any  $N\in\ov{\simp(\FF)}$, there exists a $\Theta$-inflation $N \rightarrowtail F$  such that $F\in\FF$.	This implies that $N\in\FF$. Thus we have  $\simp(\FF)\subseteq\ov{\simp(\FF)}\subseteq\FF$ and thus
		$$\Filt\simp(\FF)\subseteq\Filt\ov{\simp(\FF)}\subseteq\Filt(\FF).$$	
		Since $\FF$ is closed under $\Theta$-extensions, we have $\Filt(\FF)=\FF$. By Theorem \ref{thm:schur-mbrick}, we conclude that $\simp(\FF)=\ov{\simp(\FF)}$. The observation above implies that $\simp(\FF)\in\ccmbrick\AA$ and $\Filt\simp(\FF)=\FF$.
		
		Conversely, we take $\MM \in \ccmbrick\AA$.
		We claim that $\simp\FFF(\MM)$ is a cofinal extension $\MM$. Take any  $M\in\MM$. 
		Observe that $M$ is left Schurian for $\Sub(\MM)$. By using Lemma \ref{lem-schur1} and Proposition \ref{P-3-7}, we infer that $M$ is left Schurian for $\Filt(\Sub(\MM))=\FFF(\MM)$. Then Lemma \ref{schurian-simple}(2) implies that $\MM\subseteq\simp \FFF(\MM)$. If $X\in \simp\FFF(\MM)$, then  $X\in \FFF(\MM)=\Filt(\Sub(\MM))$. It is easily checked that $X\in\Sub(\MM)$.  Take a non-zero morphism $\alpha:P\rightarrow N$ with $P\in \simp\FFF(\MM)$ and $N\in \Sub(\MM)$. Consider the
		$\Theta$-decomposition $(i_{\alpha}, X_{\alpha},j_{\alpha})$ of $\alpha$. Then there exits a commutative diagram
		$$
		\xymatrix{
			\cocone i_{\alpha} \ar@{>->}[r]^{} & P\ar@{->>}[dr]_{i_{\alpha}} \ar[rr]^{\alpha} & &N~ \ar@{->>}[r]^{} & \cone j_{\alpha}.\\
			&   &  X_{\alpha}\ar@{>->}[ur]_{j_{\alpha}} &  &  }
		$$
		Obseve that $\cocone i_{\alpha}, X_{\alpha}\in\FFF(\MM)$. Since $P\in \simp\FFF(\MM)$, we infer that $i_\alpha$ is an isomorphism and hence $\alpha\cong j_{\alpha}$ is a $\Theta$-inflation. This implies that $P$ is left Schurian for $\Sub(\MM)$. The observation above implies that $\simp\FFF(\MM)$ is a cofinal extension $\MM$. Thus $\MM\subseteq\simp\FFF(\MM)\subseteq\ov{\MM} =\MM$. This implies that $\simp\FFF(\MM)=\ov{\MM} =\MM$. By using Theorem \ref{thm:schur-mbrick}, we infer that $\Filt\MM=\Filt\simp\FFF(\MM)=\FFF(\MM)\in\torf\AA$ and $\simp\Filt\MM=\MM$. This finishes the proof.
	\end{proof}

Dually, the torsion classes in length categories  can be classified by using epibricks. We finish this section with a straightforward example illustrating some of our results.
	
	\begin{table}[h]
	\caption{}
	\begin{tabular}{|p{2.9cm}|p{5.5cm}|p{2.8cm}|p{3.0cm}|}
		\hline
		monobrick $\MM$ & left Schur subcatgory $\aFilt \MM$  &torsion-free ? & $\ov{\MM} $ \\
		\hline
		0    &  0  &   Yes     &  itself    \\
		\hline
		$\{P_{1}\}$        &   	$\add(\{P_{1}\})$   &   Yes       & itself   \\
		\hline
		$\{P_{2}\}$        &   $\add(\{P_{2}\})$     &     No     &  $\{S_{1}[-1],P_{2}\}$  \\
		\hline
		$\{S_{3}\}$        &    $\add(\{S_{3}\})$     &   No	  &  $\{S_{2}[-1],S_{3}\}$  \\
		\hline
		$\{S_{1}[-1]\}$  &    $\add(\{S_{1}[-1]\})$   &  Yes  &  itself    \\
		\hline
		$\{I_{2}[-1]\}$   &  $\add(\{I_{2}[-1]\})$      &  No  &  $\{S_{2}[-1],I_{1}[-1]\}$    \\
		\hline
		$\{S_{2}[-1]\}$    &   $\add(\{S_{2}[-1]\})$   & Yes  & itself   \\
		\hline
		$\{S_{2}[-1],I_{2}[-1]\}$ &  $\add(\{S_{2}[-1],I_{2}[-1]\})$  &  Yes   & itself    \\
		\hline
		$\{S_{2}[-1],S_{3}\}$ & $\add(\{S_{2}[-1],S_{3}\})$   &   No  & $\{S_{2}[-1],I_{2}[-1],S_{3}\}$  \\
		\hline
		$\{S_{2}[-1],S_{1}[-1]\}$ &  $\add(\{S_{2}[-1],S_{1}[-1],I_{2}[-1]\})$   &   Yes  & itself    \\
		\hline
		$\{S_{2}[-1],P_{2}\}$&      $\add(\{S_{2}[-1],P_{2},S_{2}[-1],\})$       &   No  & $\{S_{2}[-1],S_{1}[-1],P_{2}\}$   \\
		\hline
		$\{S_{2}[-1],P_{1}\}$&      $\add(\{S_{2}[-1],P_{1}\})$       &   Yes  & itself    \\
		\hline
		$\{I_{2}[-1],S_{3}\}$ &     $\add(\{I_{2}[-1],S_{3}\})$        &   No  & $\{S_{2}[-1],I_{2}[-1],S_{3}\}$  \\
		\hline
		$\{I_{2}[-1],P_{1}\}$ &      $\add(\{I_{2}[-1],P_{1},S_{3}\})$       &   No  & $\{S_{2}[-1],I_{2}[-1],P_{1}\}$   \\
		\hline
		$\{S_{1}[-1],S_{3}\}$ &     $\add(\{S_{1}[-1],S_{3}\})$       &  No   & $\{S_{2}[-1],S_{1}[-1],S_{3}\}$     \\
		\hline
		$\{S_{1}[-1],P_{2}\}$ &      $\add(\{S_{1}[-1],P_{2}\})$       &   Yes  & itself    \\
		\hline
		$\{S_{1}[-1],P_{1}\}$ &      $\add(\{S_{1}[-1],P_{1},P_{2}\})$       &   Yes  & itself    \\
		\hline
		$\{S_{2}[-1],I_{2}[-1],S_{3}\}$ & $\add(\{S_{2}[-1],I_{2}[-1],S_{3}\})$   &   Yes  & itself  \\
		\hline
		$\{S_{2}[-1],I_{2}[-1],P_{1}\}$ & $\add(\{S_{2}[-1],I_{2}[-1],S_{3},P_{1}\})$       &   Yes  & itself     \\
		\hline
		$\{S_{2}[-1],S_{1}[-1],S_{3}\}$ & $\add(\{S_{2}[-1],I_{2}[-1],S_{1}[-1],S_{3}\})$          &  Yes   & itself      \\
		\hline
		$\{S_{2}[-1],S_{1}[-1],P_{2}\}$ & $\add(\{S_{2}[-1],I_{2}[-1],S_{3},S_{1}[-1],P_{2}\})$       &   Yes  & itself     \\
		\hline
		$\{S_{2}[-1],S_{1}[-1],P_{1}\}$ & 	$\AA$     &   Yes  &   itself     \\
		\hline
	\end{tabular}
\end{table}
	\begin{example}\label{exm:torfproj}  Let $\Lambda$ be the path algebra of the quiver $1\longrightarrow2\longrightarrow3$. The Auslander-Reiten quiver of the bounded derived category $D^{b}(\Lambda)$ is as follows:
		\begin{equation*}
			\xymatrix@!=0.5pc{
				&& S_3[-1]\ar[dr]  && S_2[-1]\ar[dr] && S_1[-1]\ar[dr] && P_1\ar[dr] && \\
				&&\cdots\cdots\quad& P_2[-1]\ar[dr]\ar[ur] && I_2[-1]\ar[ur]\ar[dr] && P_2\ar[ur]\ar[dr] && I_2\ar[dr] & \cdots\cdots\\
				&&&& P_1[-1]\ar[ur] && S_3\ar[ur] && S_2\ar[ur] && S_1}
		\end{equation*}
		Then one can check that the set  $\XX=\{S_{2}[-1],S_{1}[-1],P_{1}\}$ is a proper semibrick.  Set $\AA:=\0Filt_{D^{b}(\Lambda)}(\XX)$ and $\Theta:=l_{\XX}$. Then Theorem \ref{main0}(2) implies that $(\AA,\Theta)$ is a stable length category. In particular, we have $\Filt=\aFilt$. It is obvious that the set $\MM=\{S_2[-1],I_2[-1]\}$ is a monombrick. By Theorem \ref{thm:torfproj}, the subcategory $\aFilt \MM=\add(\MM)$ is  left Schurian. Indeed, $\add(\MM)$ is a torsion-free class and hence $\MM$ is  cofinally closed. In Table 1, we list  all monombricks $\MM$ and the corresponding left Schur subcatgories $\aFilt \MM$. 
	\end{example}

	$\mathbf{Acknowledgments.}$ This work was  supported by the National Natural Science Foundation
	of China  (Grant Nos. 12301042, 12271249, 12571042) and the the Natural Science Foundation of Zhejiang Province (Grant No.
	LZ25A010002)


\end{document}